\documentclass[centering,11pt,reqno]{amsart}

\usepackage[utf8]{inputenc}
\usepackage[T1]{fontenc}
\usepackage{amsmath,amsthm}
\usepackage{amsfonts,amssymb}
\usepackage[mathscr]{eucal}
\usepackage{url}
\usepackage{mdframed}
\usepackage{paralist}
\usepackage[colorlinks,cite color=blue,pagebackref=true,pdftex]{hyperref}
\usepackage[margin=2.5cm]{geometry}
\usepackage{tikz}
\usetikzlibrary{calc,shapes}
\usepackage{mathtools}
\usepackage{multicol}
\usepackage{ytableau}
\usepackage{blkarray}
\usepackage[capitalize]{cleveref}
 \usepackage[foot]{amsaddr}
\usepackage{tikz-cd}
\usepackage{ytableau}
\usepackage{bbm}
\usepackage{comment}
\usepackage{color}
\usepackage{todonotes}
\usepackage{accsupp}

\colorlet{mygray}{gray}
\colorlet{mypolyfill}{gray!10}
\colorlet{myblue}{blue}
\colorlet{myred}{red}

\newtheorem{theorem}{Theorem}[section]
\newtheorem*{theorem*}{Theorem}
\newtheorem*{cor*}{Corollary}
\newtheorem*{conj*}{Conjecture}
\newtheorem*{lemma*}{lem}
\newtheorem*{prop*}{Proposition}
\newtheorem{lem}[theorem]{Lemma}
\newtheorem{cor}[theorem]{Corollary}
\newtheorem{prop}[theorem]{Proposition}

\theoremstyle{definition}

\renewcommand{\emptyset}{\varnothing}
\renewcommand{\epsilon}{\varepsilon}
\newcommand{\R}{\mathbb{R}}

\newcommand{\conv}{\mathrm{conv}}

\DeclareMathOperator{\xc}{xc}
\DeclareMathOperator{\vxc}{vxc}
\DeclareMathOperator{\nnc}{nnc}

\def\R{\mathbb{R}}

\def\N{\mathbb{N}}

\newcommand{\defn}[1]{\emph{#1}}

\title{Two Linear Programs are  Better than One}

\author[Black]{Alexander E. Black}
\address{Department of Mathematics, Bowdoin College}
\email{a.black@bowdoin.edu}

\author[Hertrich]{Christoph Hertrich}
\address{Department Liberal Arts and Social Sciences, University of Technology Nuremberg}
\email{christoph.hertrich@utn.de}

\author[Kober]{Stefan Kober}
\address{Department of Mathematics, London School of Economics and Political Science}
\email{S.Kober@lse.ac.uk}

\author[Loho]{Georg Loho}
\address{Institute of Mathematics, Freie Universit\"at Berlin}
\email{georg.loho@math.fu-berlin.de}

\begin{document}

\maketitle

\begin{abstract}  
The extension complexity $\xc(P)$ of a polytope $P$ is the minimum number of facets of a polytope that projects onto it. Motivated by the complexity of neural networks, Hertrich and Loho introduced the virtual extension complexity $\vxc(P)$ of a polytope $P$, which is the minimum of $\xc(Q) + \xc(R)$ among all polytopes $Q$ and $R$ such that $P+Q=R$. We show the following results pertaining to virtual extension complexity. 

First, we resolve the core open problem on the topic by showing the virtual extension complexity can be exponentially smaller than extension complexity. Second, we show there are matroid base polytopes with exponential virtual extension complexity by adapting a semi-algebraic counting argument by Hrube\v{s} that extends the counting arguments of Rothvo{\ss}. 
Finally, we show that the maximum virtual extension complexity of a polygon with $n$ vertices is $\Theta(\sqrt{n})$. This resolves an analogue of a question on extension complexity of Fiorini, Rothvo{\ss}, and Tiwary in 2011 in the case of virtual extension complexity. 
\end{abstract}

\section{Introduction}

Feasible sets of systems of linear inequalities, also known as \emph{polyhedra}, are a fundamental modeling tool for optimization problems.
  The running time of algorithms for solving these problems grows with the number of inequalities. 
  An approach to overcome this obstacle is to use an \emph{extended formulation}, a description in a higher-dimensional space, and then project to the original components.
  This can significantly reduce the required number of inequalities.
  The minimal number of inequalities in such a description is known as \emph{extension complexity}.
  Extended formulations are among our strongest tools for solving combinatorial optimization problems, and as a consequence extension complexity has been studied extensively ~\cite{ExtensionSurvey1, ExplLowBoundsExtensions}.

Hertrich and Loho recently proposed a generalization of extension complexity motivated by neural networks \cite{VirtualExtensions}.  See Subsection \ref{subsec:nncomplexity} for an overview of the connection. The \defn{virtual extension complexity} $\vxc(P)$ of a polytope $P$ is the minimum of the sum $\xc(Q) + \xc(R)$ over all polytopes $Q, R$ such that $P + Q = R$, where $P + Q = \{x + y: x \in P, y \in Q\}$ is the \emph{Minkowski sum}. 
This allows for even more flexibility in the representation of the original optimization problem over~$P$: you solve it over $Q$ and $R$, and then take the difference of optimal values to get the optimal value over $P$. However, there is a trade-off. Minkowski sums generally yield much more complicated polytopes. In fact, at the conclusion of \cite{VirtualExtensions}, they note that there is no known example where extension complexity and virtual extension complexity differ. A priori, they could always be equal. Question 5.1 in \cite{VirtualExtensions} asks whether this is the case.

Our first main result establishes that $\xc$ and $\vxc$ of a polytope can be substantially different.

\begin{theorem}\label{thm:exponential-separation}
Let $n\ge4$ be even, and let $P_{n} \subseteq \mathbb{R}^{\binom{[n]}{2}}$ be the $0/1$-polytope given by the convex hull of indicator vectors of edge sets of the complete graph such that the corresponding spanning subgraph contains a connected component with an odd number of vertices. Then $\xc(P_{n}) \ge 2^{\Omega(n)}$ and $\vxc(P_{n}) = O(n^{8})$.
\end{theorem}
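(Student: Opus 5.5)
The plan has three parts. The lower bound comes from a slack-matrix embedding of the perfect matching polytope. The upper bound rewrites the support function of $P_n$ through minimum odd cuts. The hard step is realizing a minimum of concave functions as a difference of support functions with small extended formulations.

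\textbf{Lower bound $\xc(P_n)\ge 2^{\Omega(n)}$.} First I describe the vertices of $P_n$ combinatorially. Since $n$ is even, a spanning subgraph $F$ has an odd component if and only if there is an odd set $S\subseteq[n]$ with $F\cap\delta(S)=\emptyset$. So $P_n$ is the convex hull of the points $v_S:=\chi_{E\setminus\delta(S)}$ for odd $S$, together with all $0/1$ points dominated by them.

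Now fix a perfect matching $M$ and use $\chi_M$ as a linear objective. For every odd $S$ we have $|M\cap\delta(S)|\ge1$, since some matching edge must leave an odd set. Hence every vertex $F$ of $P_n$ satisfies $\langle\chi_M,\chi_F\rangle\le |M|-1=n/2-1$. Thus $\langle \chi_M,x\rangle\le n/2-1$ is a valid inequality for $P_n$. Its slack at $v_S$ is $|M\cap\delta(S)|-1$.

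The matrix $\bigl(|M\cap\delta(S)|-1\bigr)_{S,M}$ is exactly the slack matrix of the perfect matching polytope with rows the odd-cut inequalities and columns the perfect matchings. Rothvo{\ss} showed this matrix has nonnegative rank $2^{\Omega(n)}$. It is a submatrix of a slack matrix of $P_n$, so Yannakakis' theorem gives $\xc(P_n)\ge 2^{\Omega(n)}$.

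\textbf{Upper bound on $\vxc$.} I would work with support functions. Write $h_P(c)=\max_{x\in P}\langle c,x\rangle$. The relation $P+Q=R$ is equivalent to $h_P=h_R-h_Q$. By the description above,
\[
h_{P_n}(c)=\sum_e c_e^+ \;-\; \min_{S \text{ odd}} c^+(\delta(S)).
\]
The first term is the support function of the cube, which has extension complexity $O(n^2)$. Because Minkowski sums add, it remains to write the nonnegative-weight minimum odd cut $\mu(c):=\min_{S\text{ odd}}c^+(\delta(S))$ as $h_{R'}-h_{Q'}$ with $\xc(Q'),\xc(R')=O(n^8)$. Note that $-\mu$ is the support function of a polytope, but that polytope is essentially the odd-cut dominant, which has exponential $\xc$. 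So the difference structure must be used genuinely.

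\textbf{Decomposing $\mu$.} I would express $\mu$ as a minimum of polynomially many concave functions $f_1,\dots,f_k$, each of the form $-h_{Q_i}$ with $Q_i$ of polynomial extension complexity. Good candidates are $s$–$t$ cut values (max-flow duality gives compact formulations of size roughly $O(n^3)$), possibly with parity conditions handled via Padberg--Rao or Gomory--Hu tree structure. Once $\mu=\min_i f_i$, I use the identity
\[
\min_i f_i=\sum_i f_i-\max_i\sum_{j\ne i}f_j .
\]
Both terms on the right are controlled. Each $f_i$ is minus a support function, and $-\max_i g_i$ is minus the support function of the convex hull of a union. By Balas' union construction, extension complexity is subadditive under convex hulls of unions and under Minkowski sums. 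With $k$ around $n^2$ or $n^4$ pieces, each of size about $n^4$, this yields the $O(n^8)$ bound.

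The main obstacle is the middle step: finding a polynomial family of cut-type concave functions whose pointwise minimum is exactly the minimum odd cut $\mu(c)$ for all $c\ge0$. For each fixed $c$, Padberg--Rao guarantees that the minimum is attained by some fundamental cut of the Gomory--Hu tree. However, that tree depends on $c$, and the family $f_i$ must not. First I would try fixing two disjoint odd ``witness'' vertex sets, or pairs $s,t$ plus a parity gadget. If a parity gadget cannot be made compact, the fallback is to apply the min-decomposition trick recursively.
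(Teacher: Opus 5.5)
Your lower bound is correct, but it takes a more direct route than the paper. The paper writes the odd-cut dominant as $D_n=(\mathbf{1}-P_n)+\R_+^E$ and imports the Walter--Weltge bound $\xc(D_n)\ge 2^{\Omega(n)}$. You instead embed Rothvo{\ss}'s odd-cut/perfect-matching slack matrix (transposed) into a slack matrix of $P_n$, using the valid inequalities $\langle\chi^M,x\rangle\le n/2-1$ and the vertices $\chi^{E\setminus\delta(S)}$. This is exactly the direct derivation from Rothvo{\ss} that the paper mentions as an alternative. It only needs the standard form of Yannakakis' theorem for slack matrices with respect to any valid system describing the polytope.

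The upper bound has a genuine gap, and the plan as formulated cannot be completed. Suppose $\mu=\min_i f_i$ on $\R_+^E$ with $f_i=-h_{Q_i}$ and each $\xc(Q_i)$ polynomial.
\begin{itemize}
\item Then $\mu=-\max_i h_{Q_i}=-h_K$ on $\R_+^E$, where $K=\conv\bigcup_i Q_i$ has polynomial extension complexity by Balas.
\item For $c\ge0$ we have $\mu(c)=\min\{c^{\top}y:y\in D_n\}$. Both $D_n$ and $-K+\R_+^E$ have recession cone $\R_+^E$, so both minima are $-\infty$ off $\R_+^E$ and their minimum functions agree everywhere.
\item Hence $D_n=-K+\R_+^E$ would have polynomial extension complexity. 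That is precisely what you noted is false. Moreover $\mathbf{1}-P_n=D_n\cap[0,1]^E$ by monotonicity of $P_n$, so it would also contradict your own first part.
\end{itemize}
So no family of concave candidates can work, whether $s$--$t$ cut values, parity gadgets, or a recursion. The identity $\min_i f_i=\sum_i f_i-\max_i\sum_{j\ne i}f_j$ does not rescue this. The functions $\sum_{j\ne i}f_j=-h_{\sum_{j\ne i}Q_j}$ are concave, so their maximum is not the support function of a convex hull of a union. That identity yields a difference of support functions only when the $f_i$ are convex. You rightly say the difference structure must be used genuinely, but your decomposition never uses it.

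The missing idea is a formula for $\tau$ as a minimum of \emph{non-concave} pieces that does not depend on a $c$-specific Gomory--Hu tree. The paper uses threshold graphs $H_t(c)$, whose components are cliques and are all of even order iff $t\le\tau(c)$. From these it shows
\[
\tau(c)=\min_{v\in V}\sum_{j=1}^{n-1}(-1)^{j+1}\lambda_{v\pi_v(j)}(c),
\]
where $\pi_v$ sorts the values $\lambda_{vu}(c)$ increasingly. For each $v$, the alternating sum is the total length of thresholds $t$ at which $v$'s component has even order. This set always covers $(0,\tau(c)]$. Equality holds for a vertex $v^*$ that stays in an odd component for all $t>\tau(c)$. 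Sorting and alternating signs are where subtraction genuinely enters.

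The paper then does not build $Q,R$ by hand. It computes each $\lambda_{uv}$ with a max-flow network of size $O(n^6)$ (Hertrich--Sering) and sorts with a sorting network. It then takes the minimum of the alternating sums and converts via $\vxc\le 4\,\nnc$, giving $O(n^8)$.
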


To prove this, we draw on classical results in combinatorial optimization. In breakthrough work, Rothvo\ss{}~\cite{MatchingExtensions} introduced a powerful technique to show that the matching polytope has exponential extension complexity.
The family of polytopes $P_n$ is sufficiently similar to the matching polytope for Rothvo\ss{}' technique to still apply to give exponential lower bounds on their extension complexity. In contrast, we prove that it has polynomial virtual extension complexity by using the connection to neural networks for representing min-cut problems from   \cite{hertrich2025relu}.

This gives hope that virtual extension complexity could be a better model for polynomial time solvability. Could it be that most popular polynomial time solvable problems admit polynomial sized virtual extended formulations? We study this question next.

To do this, we consider $0/1$-polytopes, that is, those polytopes whose vertices have only components $0$ or $1$.
Restricting edge directions to differences of standard basis vectors in $\R^n$ yields \defn{matroid base polytopes}, a class of particular importance in combinatorial optimization.
Upper bounds are known for \emph{regular matroids}, which can be defined as those matroids that can be represented over any field. 
More precisely, by~\cite{AF22}, their extension complexity is upper bounded by $O(n^6)$, while their virtual extension complexity has been shown in~\cite{HKL26} to be upper bounded by $O(n^3)$. 
Concerning lower bounds, using a counting-based argument, Rothvo\ss~\cite{Rot13} showed that most 0/1 polytopes, and even almost all matroid base polytopes, have exponential extension complexity. 

Using the idea of counting those subsets of $\{0,1\}^n$ expressible by real-algebraic formulas, Hrube\v{s}~\cite{Hru20} derived a slightly weaker bound though with a more flexible framework.
In Section~\ref{sec:counting-lower-bound}, we show how this technique can be applied to 
obtain lower bounds on the \textbf{virtual} extension complexity of almost all matroid base polytopes. 
The following is a direct consequence of Theorem~\ref{thm:counting} using that there are doubly-exponentially many matroid base polytopes~\cite{Dukes2003}.

\begin{theorem} \label{thm:mbp} 
    There exists $\varepsilon'>0$ such that asymptotically almost every matroid $M$ on the ground set $[n]$ satisfies $\vxc(B(M))\ge2^{\varepsilon'n}$, where $B(M)$ is the base polytope of $M$. 
\end{theorem}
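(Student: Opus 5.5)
The plan is a counting argument: bound how many $0/1$-polytopes in $[0,1]^n$ can have small virtual extension complexity, and compare this with the doubly exponential number of matroids. We take Theorem~\ref{thm:counting} (stated later in Section~\ref{sec:counting-lower-bound}) in the form we expect it to have. It should say that the number of subsets $S\subseteq\{0,1\}^n$ whose convex hull $\conv(S)$ satisfies $\vxc(\conv(S))\le s$ is at most $2^{\mathrm{poly}(s)\cdot n}$, or more precisely something like $2^{O(s^{c} n)}$ for a fixed constant $c$.

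The mechanism behind such a bound is as follows. If $P+Q=R$, then $P$ is determined by its support function $h_P=h_R-h_Q$. Each of $h_Q$ and $h_R$ is the value of a linear program of size at most $s$. So membership of a $0/1$ point $v$ in $P$ can be decided by a sign condition. One such condition is that $v$ is a vertex of $P$ exactly when $v$ is one of the given $0/1$ points. Another is that $\langle c,v\rangle\le h_R(c)-h_Q(c)$ for suitable test directions $c$. Either condition is expressible by a quantifier-free first-order formula over the reals in the LP data. Hrube\v{s}-style counting via Warren/Milnor-type sign-pattern bounds then limits the number of realizable sets to $2^{\mathrm{poly}(s)\,n}$.

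Next, we invoke the lower bound on the number of matroids. By \cite{Dukes2003} (building on Knuth's construction), the number of matroids on $[n]$ is at least $2^{\binom{n}{n/2}/(2n)}$, which is $2^{2^{n}/\mathrm{poly}(n)}$. Distinct matroids have distinct base sets. Their base polytopes are therefore distinct $0/1$-polytopes, since a base polytope's vertex set is exactly the set of indicator vectors of bases. So the map $M\mapsto B(M)$ is injective.

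Finally, set $s=2^{\varepsilon' n}$. The number of matroids with $\vxc(B(M))\le s$ is at most $2^{2^{c\varepsilon' n}\cdot n}$. Choose $\varepsilon'>0$ small enough that $c\varepsilon'<1$, say $c\varepsilon'\le 1/2$. Then this count is at most $2^{2^{n/2}n}$. This is a vanishing fraction of $2^{2^n/\mathrm{poly}(n)}$, because $2^{n/2}n=o(2^n/\mathrm{poly}(n))$. Hence asymptotically almost every matroid $M$ on $[n]$ satisfies $\vxc(B(M))\ge 2^{\varepsilon' n}$.

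The only real content is Theorem~\ref{thm:counting} itself, which we assume. The one thing to check carefully is that its exponent really is polynomial in $s$ times $n$ (or at least $2^{o(n)}$ when $s=2^{\varepsilon' n}$). If instead it carried something like $s^{n}$, then with $s=2^{\varepsilon' n}$ the count would already be $2^{\varepsilon' n^2}$. That is still far below $2^{2^n/\mathrm{poly}(n)}$, so the conclusion survives. Once the exponent is confirmed, choosing $\varepsilon'$ is routine.
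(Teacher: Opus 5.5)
Your deduction is correct and is the paper's own: Theorem~\ref{thm:counting} (stated there as at most $2^{s^c}$ such $0/1$-polytopes when $s\ge n$, which fits your guessed form since $s=2^{\varepsilon' n}\ge n$ for large $n$) is combined with the doubly exponential count of matroids from \cite{Dukes2003} and the injectivity of $M\mapsto B(M)$, and then $\varepsilon'$ is chosen with $c\varepsilon'<1$. Your aside on the mechanism is off, though: the paper does not write membership as a quantifier-free formula; it writes \emph{non}-membership as an existential $\Sigma_1$-formula (a separating direction $d$, a point of $Q$, and a dual certificate for $R$), because membership would need a universal quantifier over directions — this does not affect your argument, since you take Theorem~\ref{thm:counting} as given.
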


Thus, even classical problems that are solvable in polynomial time may still have no polynomial sized virtual extended formulation.

Finally, we focus on the case of polygons, which has received substantial attention in the extension complexity literature. The core question, asked originally in 2011 in \cite{PolygonExtensions}, is what the worst-case extension complexity of a polygon with $n$ vertices is, and they prove a lower bound of $\Omega(\sqrt{n})$ and no nontrivial upper bound. The first upper bounds less than $n$ of roughly $\lceil 6n/7 \rceil$ were attained independently by Shitov \cite{ShitovSubN} and Padrol and Pfeifle \cite{PadrolPfeifleSubN}. Shitov then improved this upper bound to a first sublinear bound of $o(n)$ in a preprint and then later strengthened this to an upper bound of $O(n^{2/3})$ in \cite{shitovsublinear}. In the case of random polygons, combined results of Padrol \cite{RandomPolygonLowerBounds} and Kwan, Sauermann, and Zhao  \cite{LowDimExtensions} imply that the expected extension complexity of a random polygon with $n$ vertices is $\Theta(\sqrt{n})$. Furthermore, for cyclic polygons, i.e., polygons inscribed in a circle, Kwan, Sauermann, and Zhao showed in the same work that their maximum extension complexity is $O(\sqrt{n})$ and conjectured that this should be the case for all polygons. 

Motivated by their work, we asked the same question for virtual extension complexity, and managed a complete resolution via a geometric argument extending the ideas from \cite{LowDimExtensions}: 

\begin{theorem}
\label{thm:main}
The maximum virtual extension complexity of a polygon with $n$ vertices is $\Theta(\sqrt{n})$. 
\end{theorem}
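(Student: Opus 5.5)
The statement has two halves. I would prove the lower bound by a semialgebraic dimension count, following Fiorini, Rothvo\ss{} and Tiwary~\cite{PolygonExtensions}. I would prove the upper bound by an explicit virtual construction, modelled on the cyclic-polygon argument of Kwan, Sauermann and Zhao~\cite{LowDimExtensions}. The upper bound is where I expect the real difficulty.

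\emph{Lower bound.} Fix $k$ and consider all triples $(P,Q,R)$ with $P$ a polygon, $P+Q=R$ and $\xc(Q)+\xc(R)\le k$. We may assume $Q,R\subseteq\R^2$. Each of $Q$ and $R$ is then a linear projection to $\R^2$ of a polytope $\{y\in\R^{k}: Ay\le b\}$ with at most $k$ inequalities. After normalising, it is described by $O(k^2)$ real parameters: the matrix $A$, the vector $b$, and a $2\times k$ projection.

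Next I would show that $P$ is determined by $(Q,R)$ up to finitely many combinatorial choices. Support functions are additive under Minkowski sum, so $h_P=h_R-h_Q$. Hence every vertex of $P$ is a difference of a vertex of $R$ and a vertex of $Q$, namely the two vertices maximising a common generic direction. Every vertex of $Q$ or $R$ is in turn a rational function of the parameters on each cell of a finite semialgebraic decomposition.

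It follows that the set of vertex tuples in $\R^{2n}$ of polygons $P$ with $\vxc(P)\le k$ is a finite union of images of semialgebraic maps from spaces of dimension $O(k^2)$. So it is a semialgebraic set of dimension $O(k^2)$. The set of convex $n$-gons has dimension $2n$. Therefore, if $k^2\le cn$ for a small constant $c$, a generic convex $n$-gon, for instance one with algebraically independent vertex coordinates, has $\vxc(P)>k$. This gives $\Omega(\sqrt n)$.

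\emph{Upper bound.} I would sort the edges of $P$ by outer normal direction and split them into $m\approx\sqrt n$ consecutive arcs, each containing about $\sqrt n$ edges. For each arc $j$, let $P_j$ be the polygon formed by the chain of edges in that arc together with one closing segment. Then
\[
P+\sum_j T_j \;=\; \sum_j P_j ,
\]
where $T_j$ is a triangle or segment that accounts for the closing edges. Equality holds because both sides have the same multiset of edge vectors, and a polygon is determined up to translation by its edge vectors. Using $\vxc(A+B)\le\vxc(A)+\vxc(B)$ alone only gives $O(n)$, which is too weak.

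The missing ingredient is a \emph{shared lift}. I want a single polytope of dimension $O(1)$ or $O(\sqrt n)$ with $O(\sqrt n)$ facets, whose suitable projections (or a single projection of a product-like structure) realise $\sum_j P_j$. The right-hand side $\sum_j T_j$ has only $O(\sqrt n)$ edges, so it has $\xc=O(\sqrt n)$ trivially.

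The freedom that virtual extensions add is what should make this possible. We are allowed to add an arbitrary correcting polygon $Q$ of small complexity, so we can replace $P$ by $P+Q$ with $Q$ chosen to "straighten" $P$. Concretely, I would choose $Q$ so that the edge vectors of $P+Q$ in each arc become positive combinations of a fixed family of $O(\sqrt n)$ directions arranged in a grid-like pattern. In that situation $P+Q$ is a projection of a polytope defined by $O(\sqrt n)$ inequalities, as in the parabola-lifting trick of~\cite{LowDimExtensions}.

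I expect this straightening step to be the main obstacle. It requires choosing $Q$ with $O(\sqrt n)$ edges whose edge directions interleave with those of $P$ so that all arcs can be handled simultaneously. The first thing I would try is to take $Q$ with one edge per arc, so that $\xc(Q)=O(\sqrt n)$ holds automatically, and then verify directly that $R=P+Q$ admits an extension with $O(\sqrt n)$ facets.
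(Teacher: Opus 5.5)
Your lower bound is fine. It is the same parameter count as the paper's: the paper phrases it via algebraically independent vertex coordinates and transcendence degree rather than semialgebraic dimension, but the content is identical.

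The upper bound, however, has a genuine gap. Its whole content is the claim $\xc(P+Q)=O(\sqrt n)$ for a suitable $Q$ with $O(\sqrt n)$ edges, and you do not prove it. Moreover, the mechanism you sketch is not available, because Minkowski addition never removes or alters an edge direction. For every outer edge normal $u$ of $P$, the face of $P+Q$ in direction $u$ is the corresponding edge of $P$ plus a face of $Q$. So every edge of $P$ survives in $P+Q$, lengthened only if $Q$ has a parallel edge. Within each arc, $P+Q$ therefore carries the same unstructured chain of about $\sqrt n$ edges as $P$. No choice of $Q$ can ``straighten'' these into a grid-like family suited to a parabola-lifting argument. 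The identity $P+\sum_j T_j=\sum_j P_j$ does not help either, as you note yourself.

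Your ``first thing to try'' is in fact the paper's choice of $Q$. Its edge normals are placed in the interiors of the normal cones of every $\lceil\sqrt n\rceil$-th vertex of $P$. Hence each vertex normal cone of $Q$ contains at most $\ell=O(\sqrt n)$ vertex normal cones of $P+\lambda Q$. What makes this work is a slack-matrix argument (Lemma~\ref{lem:MinkSumBound}) that your proposal lacks:
\begin{enumerate}
\item Scale $Q$ by a large $\lambda$. Write the slack matrix of $R=P+\lambda Q$ as the rows $S_C$ from facets of $Q$ stacked on the rows $S_A$ from facets of $P$. Trivially $S_C$ has nonnegative rank at most $n_Q=O(\sqrt n)$.
\item Index the vertices of $R$ as $r^{(i,j)}=\lambda q^{(i)}+p^{(i,j)}$, where $i$ is the vertex of $Q$ and $j\in[\ell]$ is the position within its block. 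Let $f(k)$ be the vertex of $Q$ maximizing the $k$th facet normal $A_k$ of $P$.
\item Split $S_A=M+N$ with $M_{k,(i,j)}=s^{(f(k),j)}_k$. This depends only on $j$, so $M$ has at most $\ell$ distinct columns.
\item The remainder is $N_{k,(i,j)}=A_k\bigl(\lambda(q^{(f(k))}-q^{(i)})+p^{(f(k),j)}-p^{(i,j)}\bigr)$. It is nonnegative once $\lambda$ is large. Its restriction to the rows in $f^{-1}(t)$ factors through $\R^2$. So each of the $n_Q$ blocks has rank at most $2$, hence nonnegative rank at most $2$ by Theorem~\ref{thm:LowRank}.
\end{enumerate}
Altogether $\xc(P+\lambda Q)\le\ell+3n_Q=O(\sqrt n)$. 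The ideas you are missing are three: the large scaling of $Q$, which makes $N$ nonnegative; the grouping of the facets of $P$ by their maximizing vertex of $Q$; and the rank-two theorem, which lets each group cost only two rank-one terms.
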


It remains an open question whether in the case of polygons, virtual extension complexity can be significantly smaller than extension complexity. If not, then our result implies that the extension complexity of a polygon will be at most $O(\sqrt{n})$. We propose this as a novel approach to that conjecture. 

A further significant question left open by our work is to determine the worst-case asymptotic virtual extension complexity of the matching polytopes. Furthermore, considering the original motivation of $\vxc$ via neural networks, it remains an open question whether the support function of the matching polytope can be represented by polynomial size neural networks. This stronger question is even open for the support function of the Birkhoff polytope, corresponding to maximum-weight bipartite perfect matching.

\subsection{Preliminaries}

\subsubsection{Extended formulations} Let $P\subseteq\R^n$ be a polyhedron. 
We say that a polyhedron $Q$ in $\R^d$ is an \defn{extension} of $P$, if there is a projection $\pi:\mathbb{R}^d\to\mathbb{R}^n$ such that $\pi(Q)=P$.
More formally, we have $A\in\R^{m\times d}$, $b\in\R^{m}$, $C\in\R^{n\times d}$ and $c\in\R^n$ such that 
\[
    Q=\{y\in\R^d: Ay\le b\}\qquad\text{and}\qquad P=\{Cy+c: y\in Q\}.
\]
Then, $(A,b,C,c)$ is called an \defn{extended formulation} of $P$.
Note that if $P$ is a polytope, then we can assume without loss of generality that $d\le m$.
The \defn{extension complexity} $\xc(P)$ is defined as the smallest number of inequalities (number of rows of $A$) of any extended formulation of $P$ (note that we do not use equations in the definition of extended formulations, since they can be projected out). 

\subsubsection{Virtual extended formulations} 
Recall the \defn{Minkowski sum} of two subsets $X$ and $Y$ of $\R^n$ denoted by $X+Y:=\{x+y:x\in X,y\in Y\}$.
While we just defined extension complexity more generally for (potentially unbounded) polyhedra, virtual extension complexity is only defined for polytopes. See the conclusion of Section 4 of \cite{VirtualExtensions} for discussion as to why. 
The \defn{virtual extension complexity} of a polytope $P\subseteq\R^n$ is 
\[\vxc(P):=\min\left\{\xc(Q)+\xc(R): Q, R \text{ polytopes with } P+Q=R; \right\}.\]

Linear optimization over a polytope $P\subseteq\R^n$ can equivalently be phrased as evaluating the \defn{support function}, that is, the function $h_P(c):=\max\{c^{\top}x:x\in P\}$ for a fixed $c \in \R^n$.

For polytopes $P,Q,R\subseteq\R^n$ with $P+Q=R$, we have for all $c\in\R^n$ that $h_P(c)+h_Q(c)=h_R(c)$.
This shows that indeed linear optimization over $P$ can be reduced to linear optimization over extended formulations of $Q$ and $R$; see also \cite{VirtualExtensions} for details on how to recover the actual optimal solution. 
Hence, virtual extension complexity is actually a reasonable complexity measure for linear optimization over~$P$.

\subsubsection{Neural network complexity}
\label{subsec:nncomplexity}
For a general introduction to neural networks, we refer to \cite{Goodfellow-et-al-2016}. We use the same definitions and conventions as in~\cite{VirtualExtensions}.
In particular, when talking about neural networks, we generally refer to \emph{rank-2 maxout networks} as defined in~\cite{VirtualExtensions}. Up to a small constant factor in size, every such network is equivalent to the well-known model of \emph{ReLU feedforward networks}.

A \defn{maxout neural network} is based on a directed, acyclic graph with node set $V$ and arcs $A \subseteq V \times V$, where the nodes are called \defn{neurons} and act as computational units.
The $d\geq1$ neurons with in-degree zero are called \defn{input neurons}. All other $s\geq1$ neurons are \defn{maxout units}. Every maxout unit computes the maximum of two linear combinations of the outputs of its predecessors.
We assume that among the maxout units there exists exactly one neuron with out-degree zero, the \defn{output neuron}.
The \defn{size} of the neural network is defined as the number $s$ of maxout units.

Note that we define neural networks without biases. For our purposes, this is without loss of generality, compare \cite[Remark~2.1]{VirtualExtensions}.
The functions represented by such maxout neural networks without biases are exactly the positively homogeneous CPWL (continuous piecewise linear) functions~\cite{Arora2018}. Note, however, that the required number of neurons can be large.
On the other hand, positively homogeneous CPWL functions are exactly the differences of support functions of polytopes.
For a polytope $P$, we define the \defn{neural network complexity} $\nnc(P)$ as the minimum number of neurons to represent its support function $h_P$ by a maxout neural network.

Hertrich and Loho introduced this notion and showed in~\cite[Theorem~4.1]{VirtualExtensions} the following inequality.
\begin{prop} \label{prop:vxc-nnc}
    For any polytope $P$, $\vxc(P)\le4\cdot\nnc(P)$.
\end{prop}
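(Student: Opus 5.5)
We have to prove that $\vxc(P)\le 4\cdot\nnc(P)$. The plan is to take a maxout network $N$ of size $s=\nnc(P)$ computing $h_P$, and to build from it, by induction over the neurons in topological order, two polytopes $Q_v,R_v$ for every neuron $v$ such that $h_{R_v}-h_{Q_v}$ is exactly the function computed at $v$. We will track $\xc(Q_v)$ and $\xc(R_v)$ along the way. At the output neuron we then get $h_P=h_{R}-h_{Q}$. This gives $h_{P+Q}=h_R$, and hence $P+Q=R$, because support functions determine polytopes. It then remains to show $\xc(Q)+\xc(R)\le 4s$.

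\textbf{The inductive step.} For an input neuron $x_i$ we write $x_i=h_{\{e_i\}}-h_{\{0\}}$, and points have extension complexity $0$ (no inequalities are needed). A linear combination $\sum_j \lambda_j f_j$ with $f_j=h_{R_j}-h_{Q_j}$ is handled by sign:
\begin{itemize}
\item for $\lambda_j\ge0$ we use $\lambda_j f_j=h_{\lambda_jR_j}-h_{\lambda_jQ_j}$;
\item for $\lambda_j<0$ the roles swap, so $\lambda_j f_j=h_{|\lambda_j|Q_j}-h_{|\lambda_j|R_j}$.
\end{itemize}
A linear combination is therefore a difference of Minkowski sums of scaled copies. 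A maxout unit computes $\max(g_1-k_1,\,g_2-k_2)$, where the $g_i,k_i$ are support functions. We rewrite this as
\[\max(g_1+k_2,\,g_2+k_1)-(k_1+k_2),\]
and note that $\max(h_A,h_B)=h_{\conv(A\cup B)}$.

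\textbf{Sizes.} We need two standard facts:
\begin{itemize}
\item $\xc(A+B)\le\xc(A)+\xc(B)$, since one can take the product of the extensions and project by the sum map;
\item $\xc(\conv(A\cup B))\le\xc(A)+\xc(B)+2$, by Balas' disjunctive formulation (homogenize each extension with its own multiplier $\lambda_A,\lambda_B\ge0$, $\lambda_A+\lambda_B=1$). This needs bounded extensions, which holds for polytopes after restricting to the extensions' recession-free parts.
\end{itemize}

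\textbf{Main obstacle: the global count.} Naively composing these bounds recursively blows up, because a neuron's output feeds many later neurons, so the same polytope gets duplicated. The fix is to build one global extended formulation rather than composing them. We introduce variables for every neuron, keeping one copy of the disjunctive system per maxout unit. Then $Q$ and $R$ are obtained as projections of linear images of the product, with shared pieces reused via linear maps instead of being copied. Concretely, each maxout unit $u$ contributes a polytope $\conv(A_u\cup B_u)$ whose description is "relative" to its inputs. The cleanest route is to view $R$ (and $Q$) as a projection of a polytope whose variables are, for each unit, the two multipliers plus copies defined linearly. The inequalities are then only the nonnegativity of the $2$ multipliers per unit, plus constraints inherited linearly.

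Under this scheme each unit adds at most $2$ inequalities to $R$ and at most $2$ to $Q$, since $Q$ likewise needs a disjunction for the negative parts. That gives $\xc(Q)+\xc(R)\le 4s$. Verifying that this shared formulation really projects onto the recursively defined polytope is the step I expect to require the most care. The key point there is linearity of the Cayley-type (homogenized) formulation in the input polytopes.
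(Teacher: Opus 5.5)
Your reduction has the right architecture. Via $\max(g_1-k_1,g_2-k_2)=\max(g_1+k_2,g_2+k_1)-(k_1+k_2)$ you write $h_P=h_R-h_Q$, where $R$ and $Q$ are built from points using only nonnegative Minkowski combinations and one $\conv(\cdot\cup\cdot)$ per maxout unit, and $4=2+2$ is the right target. The paper itself does not reprove this; it imports it from Hertrich--Loho. However, the whole content of the linear bound sits in the step you defer, and what you give there is not an argument.

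The tools you actually state (products for $+$, Balas for $\conv(\cdot\cup\cdot)$) are compositional. Each time $R_u$ or $Q_u$ is used, Balas homogenizes its description with that particular consumer's multiplier, so you get one copy of $u$'s disjunctive system per use. This duplication is not caused only by fan-out in the network; it is built into your decomposition. Indeed $R_u$ and $Q_u$ enter both branches $R^{(1)}+Q^{(2)}$ and $R^{(2)}+Q^{(1)}$, and also $Q_v$. For a chain of units $z_{v_{i+1}}=\max(z_{v_i},-z_{v_i})$, where every unit has a single successor, you get $R_{i+1}=\conv(2R_i\cup 2Q_i)$ and $Q_{i+1}=R_i+Q_i$, and your bounds double at every step. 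So ``one copy of the disjunctive system per unit, reused via linear maps'' is precisely the claim to be proved. ``Linearity of the Cayley-type formulation'' does not explain how a single copy can serve several consumers that scale it by different \emph{variable} multipliers.

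The missing idea is aggregation. For convex $K$, the homogenization $\{(\mu,y):\mu\ge0,\ y\in\mu K\}$ is a convex cone, i.e.\ $\alpha_1K+\dots+\alpha_kK=(\alpha_1+\dots+\alpha_k)K$ for $\alpha_i\ge0$. Hence all demands on $R_u$ (resp.\ $Q_u$) can be served by one copy scaled by the total demand. This collapses the construction into a generalized flow on the DAG:
\begin{itemize}
\item variables $\mu_u^1,\mu_u^2\ge0$ for each unit $u$;
\item linear equations $\mu_u^1+\mu_u^2=$ (total weighted demand on $R_u$);
\item the demand on $Q_u$ is passed on linearly to its summands;
\item unit demand at the output, and $x$ is read off as the demand on the input points.
\end{itemize}
Only the $2s$ sign constraints are inequalities. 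You must still check that the projection is exactly $R$, e.g.\ by comparing support functions: an optimal flow routes all demand at each unit through the better branch.

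Alternatively, use LP duality. In the computation of $h_R$ (and of $h_Q$), all weights on earlier units are nonnegative. So the forward evaluation is the componentwise least solution of $z_v\ge\ell^1_v(z,c)$, $z_v\ge\ell^2_v(z,c)$. Hence $h_R(c)$ is the optimum of a minimization LP with $2s$ constraints. Its dual is $\max\{c^\top(My+m_0):\ y\in\R^{2s},\ y\ge0,\ By=b\}$, whose feasible set is independent of $c$. Thus $R$ is a linear image of a polyhedron with $2s$ inequalities, and likewise for $Q$, giving $\xc(Q)+\xc(R)\le4s$.
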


As mentioned above, while we introduced neural network complexity using maxout networks, this carries over to ReLU-networks with the cost of a small constant factor~\cite[Remark~2.2]{VirtualExtensions}.

We use Proposition~\ref{prop:vxc-nnc} to prove the upper bound on virtual extension complexity in Section~\ref{sec:exponential-separation}, where we construct a neural network to represent the support function of the considered polytope~$P_n$. Furthermore, combining Proposition~\ref{prop:vxc-nnc} with our counting-based lower bound for matroid polytopes (Theorem~\ref{thm:mbp}), we obtain the following corollary.

\begin{cor}\label{cor:nnc}
    There exists $\varepsilon''>0$ such that asymptotically almost every matroid $M$ on the ground set $[n]$ satisfies $\nnc(B(M))>2^{\varepsilon'' n}$, where $B(M)$ is the base polytope of $M$.

    Hence, there is a family of matroids for which every ReLU network exactly computing the maximum-weight basis value has size $2^{\Omega(n)}$.
\end{cor}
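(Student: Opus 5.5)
The corollary follows by chaining \cref{thm:mbp} with \cref{prop:vxc-nnc}. By \cref{thm:mbp} there is $\varepsilon'>0$ such that asymptotically almost every matroid $M$ on $[n]$ satisfies $\vxc(B(M))\ge 2^{\varepsilon' n}$. For each such $M$, \cref{prop:vxc-nnc} gives
\[
\nnc(B(M))\;\ge\;\tfrac14\,\vxc(B(M))\;\ge\;2^{\varepsilon' n-2}.
\]
Choose any $\varepsilon''$ with $0<\varepsilon''<\varepsilon'$, say $\varepsilon''=\varepsilon'/2$. Then $2^{\varepsilon' n-2}>2^{\varepsilon'' n}$ as soon as $(\varepsilon'-\varepsilon'')n>2$, which holds for all sufficiently large $n$. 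Since the statement is asymptotic, finitely many small $n$ do not matter, and the same set of matroids (of proportion tending to $1$) satisfies $\nnc(B(M))>2^{\varepsilon'' n}$. This proves the first claim.

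For the second claim, we use the identity between support functions and maximum-weight bases. For a weight vector $c\in\R^n$, the maximum weight of a basis of $M$ equals $\max_{B}\sum_{i\in B}c_i=\max\{c^\top x: x\in B(M)\}=h_{B(M)}(c)$, because the vertices of $B(M)$ are exactly the indicator vectors of bases. Hence a network computing the maximum-weight basis value exactly is precisely a network representing $h_{B(M)}$.

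Two technical points need care. First, a ReLU network may be allowed biases, whereas $\nnc$ is defined for bias-free maxout networks. A network computing a positively homogeneous function can have its biases removed without increasing its size: replace the network $f$ by $\lim_{t\to\infty}f(tc)/t$, equivalently drop the biases, as in \cite[Remark~2.1]{VirtualExtensions}. Second, by \cite[Remark~2.2]{VirtualExtensions}, a ReLU network of size $s$ can be simulated by a rank-2 maxout network of size $O(s)$, so its size is at least $\nnc(B(M))/C$ for an absolute constant $C$.

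Combining these with the first part, for almost every matroid every exact ReLU network has size at least $2^{\varepsilon'' n}/C=2^{\Omega(n)}$. Because almost every matroid works, for each large $n$ we can pick one such matroid $M_n$, and this gives the claimed family.

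There is no real obstacle here; the only care needed is tracking the constant factors in the ReLU-to-maxout conversion and the bias removal.
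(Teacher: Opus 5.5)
Your proposal is correct and follows the paper's route. The paper obtains the corollary by combining Theorem~\ref{thm:mbp} with Proposition~\ref{prop:vxc-nnc}, absorbing the factor $4$ into a slightly smaller $\varepsilon''$, and it handles the ReLU claim via the bias-removal and ReLU-to-maxout remarks of \cite{VirtualExtensions}, exactly as you do.
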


\section{Exponential Separation}
\label{sec:exponential-separation}

We organize the proof of Theorem~\ref{thm:exponential-separation} into two parts.
The proof of the lower bound on the ordinary extension complexity follows from the work of Walter and Weltge~\cite{WalterWeltgeRadialCones} on the odd-cut dominant, building on Rothvo{\ss}'s exponential lower bound for the perfect matching polytope~\cite{MatchingExtensions}.
For the upper bound on the virtual extension complexity, we express the support function of the polytope through certain minimum cut values.
Then, we build a polynomial size neural network which exactly computes this expression.
This yields an explicit virtual extended formulation of size $O(n^8)$ by Proposition \ref{prop:vxc-nnc}.

Throughout this section, let $n\ge4$ be even, $V=[n]$, $E=\binom{V}{2}$ and $m=|E|$.
Recall that the polytope $P_n$ defined in the introduction is the convex hull of the incidence vectors of subsets of edges of the complete graph on $n$ vertices, such that the corresponding spanning subgraph contains a connected component with an odd number of vertices, that is,
\[P_n:=\conv\{\chi^{F}:F\subseteq E,\ (V,F)\text{ has a component of odd size}\}.\]
For a graph $G=(V,F)$ and a subset of the vertices $S\subseteq V$, let $\delta_G(S)$ denote the edges with exactly one endpoint in $S$.
If the graph is the complete graph on $n$ vertices, we also write $\delta(S):=\delta_{K_n}(S)$.
For $c\in\R_+^E$ and $F\subseteq E$, write $c(F)=\sum_{e\in F}c_e$.
Further, for distinct $u\neq v\in V$ we denote the minimum weight cut separating $u$ from $v$ by $\lambda_{uv}(c)$, and the overall minimum weight of an odd cut by $\tau(c)$.
To be more precise, we define 
\[
    \lambda_{uv}(c):=\min_{\substack{S\subseteq V\\u\in S,\ v\notin S}}c(\delta(S)),
    \qquad
    \tau(c):=\min_{\substack{S\subseteq V\\|S|\text{ odd}}}c(\delta(S)).
\]
For a polytope $K$, its support function is $h_K(w)=\max_{x\in K}\langle w,x\rangle$.

We will require the following elementary observation on $P_n$, which allows us to reduce to non-negative objective functions.
\begin{lem}\label{lem:monotone}
    A graph $G=(V,F)$ has a connected component of odd size if and only if
    $\delta_G(S)=\emptyset$ for some odd-cardinality set $S\subseteq V$.

    Moreover, let $w\in\R^E$ be a cost function on the edges, and let $c_e=\max\{w_e,0\}$ be its non-negative part.
    Then
    \[
        h_{P_n}(w)=h_{P_n}(c)=c(E)-\tau(c).
    \]
\end{lem}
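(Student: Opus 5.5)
The plan has three parts: the combinatorial characterization, the reduction to the non-negative part $c$, and the evaluation of $h_{P_n}(c)$ via a complementary-set argument.

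\emph{Characterization.} If $(V,F)$ has a component $C$ with $|C|$ odd, take $S=C$. Then $\delta_G(S)=\emptyset$, since no edge of $F$ leaves a component. Conversely, suppose $S$ is odd and $\delta_G(S)=\emptyset$. Then $S$ is a union of components of $G$. Since $|S|$ is odd, and a sum of even numbers is even, at least one of these components has odd size.

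\emph{Reduction to $c$.} Since $P_n$ is the convex hull of the $\chi^F$ above, $h_{P_n}(w)$ is the maximum of $w(F)$ over all $F$ with an odd component. By the characterization, these are exactly the $F$ satisfying $F\cap\delta(S)=\emptyset$ for some odd $S$. This family is closed under taking subsets: removing edges keeps $\delta_G(S)$ empty.

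\begin{itemize}
\item Given a feasible $F$, deleting its edges with $w_e<0$ keeps it feasible and does not decrease $w(F)$. The resulting value equals $c(F)$. So $h_{P_n}(w)\le h_{P_n}(c)$.
\item Conversely, for any feasible $F$ we have $c(F)\ge w(F)$. But an optimal $F$ for $c$ may be taken to avoid edges with $c_e=0$, by the same subset-closure. On such edges $w_e=c_e$, so $h_{P_n}(c)\le h_{P_n}(w)$.
\end{itemize}

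\emph{Evaluating $h_{P_n}(c)$.} Now assume $c\ge0$.
\begin{itemize}
\item For each odd $S$, the set $F=E\setminus\delta(S)$ is feasible with value $c(E)-c(\delta(S))$. Taking the best $S$ gives $h_{P_n}(c)\ge c(E)-\tau(c)$.
\item Any feasible $F$ avoids $\delta(S)$ for some odd $S$, hence $F\subseteq E\setminus\delta(S)$. Since $c\ge0$, this gives $c(F)\le c(E)-c(\delta(S))\le c(E)-\tau(c)$.
\end{itemize}
The two bounds give $h_{P_n}(c)=c(E)-\tau(c)$.

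No step is a serious obstacle. The only point needing care is the second half of the reduction, i.e.\ checking that the inequality $h_{P_n}(c)\le h_{P_n}(w)$ really holds and is not just $\ge$; restricting to optimal $F$ that avoid zero-weight edges settles it. Note also that $n$ even is not needed here. It only ensures odd cuts come in complementary pairs: $S$ odd implies $V\setminus S$ is odd.
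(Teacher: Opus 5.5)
Your proof is correct and matches the paper's. Both use the same characterization via unions of components, the same downward-closure argument to replace $w$ by $c$, and the same observation that for $c\ge 0$ the best feasible set is $E\setminus\delta(S)$ for a minimum odd cut $S$. The only difference is cosmetic: the paper applies your first bullet the other way around (deleting negative-$w$ edges from a $c$-optimal $F$ gives a feasible $F'$ with $w(F')=c(F)$, hence $h_{P_n}(w)\ge h_{P_n}(c)$), and $h_{P_n}(w)\le h_{P_n}(c)$ is immediate from $w\le c$ and $\chi^F\ge 0$, so your second bullet is valid but not needed.
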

\begin{proof}
    If $G$ has a component of odd size with vertex set $S$, then $\delta_G(S)=\emptyset$.
    Conversely, if $\delta_G(S)=\emptyset$, then $S$ is a union of connected
    components of $G$.
    Since $S$ contains an odd number of vertices, at least one of these components has odd size.
    Since we have non-negative edge weights, it is optimal to include as many edges as possible in the solution.
    Hence, we obtain that 
    \[
        h_{P_n}(c)=\max_{\substack{S\subseteq V\\|S|\text{ odd}}}
          c(E\setminus\delta(S))
        =c(E)-\tau(c).
    \]
    
    It remains to show that $h_{P_n}(w)=h_{P_n}(c)$.
    Indeed, let $F\subseteq E$ be a set of edges such that $(V,F)$ contains a connected component of odd size, and $c(F)$ is maximized.
    Clearly, any $F'\subseteq F$ still induces a connected component of odd size.
    Hence, we can remove all edges with a negative $w$-weight from $F$ and obtain a set $F'$ with $w(F')=c(F)$.
    Since $F$ was chosen to maximize $c(F)$, this implies $h_{P_n}(w)=h_{P_n}(c)$.
\end{proof}
We remark that similar conditions have been considered before.
In particular, the polytope $P_n$ is also monotone in the sense of Rothvo{\ss}~\cite{MatchingExtensions}.
For $x\in P_n$ and $0\le y\le x$, we have that $y\in P_n$.

\subsection{The lower bound on the extension complexity}
We derive the lower bound on the extension complexity of $P_n$ via the odd-cut dominant $D_n$, which is defined as
\[
    D_n=\conv\{\chi^{\delta(S)}:S\subseteq V,\ |S|\text{ odd}\}+\R_+^E.
\]
Walter and Weltge~\cite{WalterWeltgeRadialCones}
show that $\xc(D_n)\ge2^{\Omega(n)}$, by using Rothvo{\ss}'s lower bound for the
perfect matching polytope~\cite{MatchingExtensions}.
Note that in the context of virtual extended formulations, we only work with bounded polytopes. 
We only use the odd-cut dominant for the sake of presenting a simple proof of the lower bound on $\xc(P_n)$.
Alternatively, one could derive the statement directly from~\cite{MatchingExtensions} using some more technical arguments.

\begin{prop}\label{prop:odd-ordinary-complexity}
    Let $n\ge4$ be even. Then, $P_n$ satisfies $\xc(P_n)\ge2^{\Omega(n)}$.
\end{prop}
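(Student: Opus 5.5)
The plan is to transfer the lower bound $\xc(D_n)\ge 2^{\Omega(n)}$ of Walter and Weltge to $P_n$. We construct an extended formulation of $D_n$ from one of $P_n$ with only polynomially many extra inequalities. The natural bridge is complementation. Let $\mathbf{1}\in\R^E$ be the all-ones vector. By Lemma~\ref{lem:monotone}, the vertices of $P_n$ include the vectors $\mathbf{1}-\chi^{\delta(S)}=\chi^{E\setminus\delta(S)}$ for $|S|$ odd. Since $P_n$ is monotone (down-closed in $[0,1]^E$), every vertex of $P_n$ lies below one of these maximal points. Hence $P_n=\{x\in[0,1]^E: x\le y \text{ for some } y\in \conv\{\chi^{E\setminus\delta(S)}\}\}$.

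The first step is therefore to show
\[
\mathbf{1}-P_n=\bigl(\conv\{\chi^{\delta(S)}:|S|\text{ odd}\}+\R_+^E\bigr)\cap[0,1]^E = D_n\cap[0,1]^E .
\]
The inclusion $\subseteq$ follows from the down-closedness just described: $\mathbf{1}-x$ dominates a convex combination of odd cuts and lies in the cube. For $\supseteq$, I would compare support functions or argue via vertices. A point $z\in D_n\cap[0,1]^E$ has $\mathbf{1}-z\in[0,1]^E$ and $\mathbf{1}-z\le y$ for some $y$ in the convex hull of the complements of odd cuts. Monotonicity of $P_n$ under the coordinatewise order, within the cube, then gives $\mathbf{1}-z\in P_n$. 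The fact needed here is that a convex down-closed hull, intersected with the nonnegative orthant, is still down-closed. This holds because $P_n=\conv$ of a down-closed family of $0/1$ vectors, and such a hull is down-closed: for any $0\le x\le y$ with $y\in P_n$, reduce coordinates one at a time by convex combinations with the set where that coordinate is zeroed.

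Next, we recover $D_n$ from $D_n\cap[0,1]^E$. Since every vertex $\chi^{\delta(S)}$ of $D_n$ lies in the cube, $D_n=(D_n\cap[0,1]^E)+\R_+^E$. An extended formulation of $P_n$ with $k$ inequalities gives one of $\mathbf{1}-P_n$ with $k$ inequalities, since this is an affine image. Adding a new variable $r\ge 0$ and mapping to $z+r$ gives an extension of $D_n$ with $k+m$ inequalities. Hence $\xc(D_n)\le \xc(P_n)+\binom n2$, and so $\xc(P_n)\ge 2^{\Omega(n)}-O(n^2)=2^{\Omega(n)}$.

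I expect the main care to be needed in the identity $\mathbf{1}-P_n=D_n\cap[0,1]^E$, specifically in verifying down-closedness of $P_n$ as a polytope rather than just of its vertex set. The quoted remark following Lemma~\ref{lem:monotone} already asserts this, so I would cite it. The rest of the argument consists of routine closure properties of extension complexity under affine maps and Minkowski sums with a cone.
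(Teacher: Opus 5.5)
Your proof is correct and follows the same route as the paper: show $D_n=(\mathbf{1}-P_n)+\R_+^E$, then append $m$ nonnegative variables to an extended formulation of $P_n$ to get $\xc(D_n)\le\xc(P_n)+m$. Your detour through $\mathbf{1}-P_n=D_n\cap[0,1]^E$, and the resulting need for down-closedness of the polytope $P_n$, is valid but unnecessary. The paper obtains the Minkowski identity directly: each vertex $\mathbf{1}-\chi^F$ dominates some odd cut vector $\chi^{\delta(S)}$, and each $\chi^{\delta(S)}$ is itself such a vertex.
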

\begin{proof}
    Let $F\subseteq E$ be a set of edges such that $(V,F)$ contains a connected component of odd size.
    By Lemma~\ref{lem:monotone}, there exists a set of vertices $S$ of odd size such that $\mathbf{1}-\chi^F\ge\chi^{\delta(S)}$. 
    Moreover, for each set of vertices of odd size $S$, there is such a set of edges $F=E\setminus\delta(S)$, such that $\mathbf{1}-\chi^F=\chi^{\delta(S)}$.
    Hence, we can add the non-negative orthant $\R_+^E$ to $(\mathbf{1}-P_n)$ and obtain that
    \[
        D_n=(\mathbf{1}-P_n)+\R_+^E.
    \]

    Hence, if we have an extended formulation for $P_n$, it suffices to add auxiliary variables $z\in\R_+^E$ and to change the projection to invert the edge coordinates and add the $z$-coordinates, i.e., we project to $\mathbf{1}-x+z$.
    This process adds $m$ inequalities for the non-negativity condition of the $z$-variables.
    Thus, we obtain that $\xc(P_n)\ge\xc(D_n)-m\ge2^{\Omega(n)}$.
\end{proof}

\subsection{Minimum odd cuts from pairwise cuts}
In this section, we derive a formula for the minimum odd cut $\tau(c)$ based on the pairwise minimum cut values $\lambda_{uv}(c)$ for distinct vertices $u\neq v\in V$.
Our argument is based on a classical theorem of Gomory and Hu~{\cite{GomoryHuTrees}}, see Theorem~\ref{thm:gomory-hu}.

\begin{theorem}[Gomory--Hu~{\cite{GomoryHuTrees}}]\label{thm:gomory-hu}
    For every $c\in\R_+^E$, there is a tree $T$ on the vertex set $V$ and a
    weight function $c':E(T)\to\R_+$ such that, for distinct $u,v\in V$,
    \[
        \lambda_{uv}(c)=\min\{c'(e):e\in T_{uv}\},
    \]
    where $T_{uv}$ is the set of edges on the unique $u$--$v$ path in $T$.
    Moreover, deleting an edge $e$ of $T$ partitions $V$ into the vertex sets $U$ and $U'$ of the two corresponding subtrees, with $c(\delta(U))=c'(e)$.
\end{theorem}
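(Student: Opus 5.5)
We would prove this by the classical Gomory--Hu construction. Everything rests on the fact that the cut function $f(S):=c(\delta(S))$ is symmetric and submodular, i.e. $f(A)+f(B)\ge f(A\cap B)+f(A\cup B)$ and $f(A)+f(B)\ge f(A\setminus B)+f(B\setminus A)$, which holds because $c\ge 0$.

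First we record two consequences of submodularity.

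\emph{Triangle inequality.} For distinct $u,v,w\in V$ we have $\lambda_{uw}(c)\ge\min\{\lambda_{uv}(c),\lambda_{vw}(c)\}$. Indeed, any cut separating $u$ from $w$ also separates $v$ from one of them. Iterating along a path, $\lambda_{uv}(c)\ge\min_i\lambda_{x_ix_{i+1}}(c)$ for every sequence $u=x_0,\dots,x_k=v$.

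\emph{Uncrossing lemma.} Let $S$ be a minimum $s$--$t$ cut, and let $u,v\in S$ be distinct. Then some minimum $u$--$v$ cut $X$ satisfies $X\subseteq S$ or $V\setminus X\subseteq S$; that is, it does not split $V\setminus S$. To prove it, take any minimum $u$--$v$ cut $X$ and, replacing $X$ by its complement if necessary, assume $s\in X$ (using symmetry of $f$). If $t\notin X$, then $X\cap S$ still separates $u$ from $v$ and $X\cup S$ is an $s$--$t$ cut. Submodularity gives $f(X\cap S)+f(X\cup S)\le f(X)+f(S)$, and $f(X\cup S)\ge f(S)$ by minimality of $S$, so $f(X\cap S)\le f(X)$. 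If $t\in X$, the same argument with $X\setminus S$ and $S\setminus X$ handles it.

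Next we build the tree iteratively, maintaining a tree whose nodes are supernodes partitioning $V$. Each tree edge $e$ carries a weight $c'(e)$ equal to $f(U)$, where $U$ is the side of $e$, and $c'(e)=\lambda_{ab}(c)$ for some $a,b$ in the two supernodes adjacent to $e$. We start with a single supernode $V$.

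In each step we pick a supernode $X$ with two vertices $s,t\in X$. We contract each component of $T-X$ into a single vertex and compute a minimum $s$--$t$ cut in this contracted graph. By the uncrossing lemma, applied to the tree-edge cuts, this is also a minimum $s$--$t$ cut in $G$, so its value is $\lambda_{st}(c)$. We then split $X$ into $X\cap S$ and $X\setminus S$, joined by a new edge of weight $\lambda_{st}(c)$, and attach each former neighboring subtree to the side containing its contracted vertex.

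We must check that the invariant survives. The delicate point is when an old edge $e=\{X,Y\}$ had its witness pair $(a,b)$ with $a\in X$ and $a$ now on the far side. In this case Gomory--Hu showed that $\lambda_{a'b}(c)=\lambda_{ab}(c)$ for the representative $a'\in\{s,t\}$ now adjacent to $Y$. This follows from the triangle inequality, together with the fact that $S$ does not separate $a$ from $b$ beyond what is forced. The process ends when every supernode is a singleton, which yields $T$ and $c'$, and the ``moreover'' part holds by the invariant.

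Finally we show $\lambda_{uv}(c)=\min_{e\in T_{uv}}c'(e)$. The inequality ``$\le$'' holds because each $e\in T_{uv}$ gives a cut $U$ separating $u$ from $v$ with $f(U)=c'(e)$. For ``$\ge$'', each edge $e=\{a,b\}$ on the path satisfies $c'(e)=\lambda_{ab}(c)$, and the iterated triangle inequality along the path then gives the claim.

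The main obstacle is the invariant-maintenance step: showing that the edge weights remain equal to $\lambda$ between adjacent endpoints after a supernode is split. Here we would argue carefully with the uncrossing lemma and the triangle inequality, exactly as in Gomory--Hu's original paper.
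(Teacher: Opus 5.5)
The paper does not prove this theorem; it quotes it from Gomory and Hu and uses it as a black box. Your outline is the classical construction, and most of it is sound: submodularity, the uncrossing lemma, splitting supernodes, and the final path argument via $\lambda_{uv}\ge\min_i\lambda_{x_ix_{i+1}}$. There is, however, a genuine gap at the step you yourself call the main obstacle, and the justification you sketch points the wrong way.

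In the delicate case, $a\in X\setminus S$ while $b$ lies in a subtree attached on the $S$-side. So $S$ \emph{does} separate $a$ from $b$, contrary to your hint. Moreover, the triangle inequality alone cannot give $\lambda_{sb}(c)\ge\lambda_{ab}(c)$. Take the values $\lambda_{sa}=\lambda_{sb}=\lambda_{ta}=\lambda_{tb}=1$, $\lambda_{ab}=2$, $\lambda_{st}=3$. These satisfy every triangle inequality on $\{s,t,a,b\}$. They are also compatible with $S$ having value $3$ and with the old fundamental cut $U$ (which separates $b$ from $a,s,t$) having value $2$. Yet $\lambda_{sb}<\lambda_{ab}$. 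So minimality of $S$ must be used in a stronger form than $\lambda_{ab}\le f(S)$. Deferring this to the original paper leaves the heart of the theorem unproved.

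The repair uses your own uncrossing lemma, applied to the minimum $s$--$t$ cut $S$ and the pair $s,b\in S$. Some minimum $s$--$b$ cut $Z$ does not split $V\setminus S$, which contains both $a$ and $t$. There are two cases:
\begin{itemize}
\item If $a,t$ lie on the side of $s$, then $Z$ separates $a$ from $b$, so $f(Z)\ge\lambda_{ab}(c)$.
\item Otherwise $Z$ separates $s$ from $t$, so $f(Z)\ge\lambda_{st}(c)=f(S)\ge\lambda_{ab}(c)$. The last inequality holds because $S$ separates $a$ from $b$.
\end{itemize}
Together with $\lambda_{sb}(c)\le f(U)=\lambda_{ab}(c)$, which holds because $U$ separates $s$ from $b$, this gives the witness update. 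The other case is symmetric, with $t$ and $V\setminus S$ in place of $s$ and $S$.

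There is a smaller point of the same kind. To conclude that the contracted minimum cut is a minimum $s$--$t$ cut of $G$, you need a single minimum $s$--$t$ cut that splits none of the pairwise disjoint component sets of $T-X$. Your lemma as stated only handles one component set at a time. Its proof, though, produces the new cut by intersecting with, or subtracting from, the complement of that component set. Both operations keep every other (disjoint) component set on one side, so iterating over the components works.
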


There is a related threshold graph $H_{t}(c)$ to the Gomory-Hu tree, where we say two vertices $u$ and $v$ are adjacent if their min $u-v$ cut is of weight at least $t$. We first understand this graph and relate it to $\tau(c)$.

\begin{lem}\label{lem:odd-threshold-components}
    For $c\in\R_+^E$ and $t\ge0$, define $H_t(c)=\bigl(V,\{uv\in E:\lambda_{uv}(c)\ge t\}\bigr)$.
    Every connected component of $H_t(c)$ is a clique.
    All its connected components have even order if and only if $t\le\tau(c)$.
\end{lem}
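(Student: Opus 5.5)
Two things need proving: every component of $H_t(c)$ is a clique, and all components have even order exactly when $t\le\tau(c)$. Both will come from the Gomory--Hu tree $(T,c')$ of Theorem~\ref{thm:gomory-hu}.

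\textbf{Components are cliques.} Let $T_t$ be the forest obtained from $T$ by deleting every edge $e$ with $c'(e)<t$. By Theorem~\ref{thm:gomory-hu}, $\lambda_{uv}(c)\ge t$ holds if and only if every edge on the $u$--$v$ path in $T$ has $c'$-weight at least $t$. Since $T$ is a tree, that path is unique, so this says exactly that $u$ and $v$ lie in the same component of $T_t$. Hence $H_t(c)$ is the disjoint union of the complete graphs on the vertex sets of the components of $T_t$. In particular every component of $H_t(c)$ is a clique, and its vertex set is the vertex set of a subtree of $T_t$. Alternatively, the cut inequality $\lambda_{uw}\ge\min\{\lambda_{uv},\lambda_{vw}\}$ gives transitivity directly, but the tree description is also what the parity part needs.

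\textbf{If $t\le\tau(c)$, all components are even.} Suppose some component $C$ of $H_t(c)$ has odd order. If $C=V$, then $|V|=n$ is even, a contradiction. Otherwise $C$ is the vertex set of a component of $T_t$, so every edge of $T$ leaving $C$ has $c'$-weight less than $t$.

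We need a single such edge whose removal from $T$ cuts off an odd side. Contract each component of $T_t$ to a single node; the result is a tree whose nodes carry weights (component sizes) summing to $n$, which is even. Root this tree at the node $C$. Each branch hanging below $C$ is attached by exactly one $T$-edge $e$. The branch weights together with $|C|$ sum to the even number $n$, and $|C|$ is odd, so some branch has odd total weight.

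Deleting the corresponding edge $e$ from $T$ separates exactly that branch, an odd vertex set $U$. By Theorem~\ref{thm:gomory-hu}, $c(\delta(U))=c'(e)<t$. Therefore $\tau(c)<t$, which is the contrapositive of what we wanted.

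\textbf{If all components are even, then $t\le\tau(c)$.} We prove the contrapositive: assume $t>\tau(c)$ and let $S$ be an odd set with $c(\delta(S))=\tau(c)<t$. For any $u\in S$ and $v\notin S$, the set $S$ separates $u$ from $v$, so $\lambda_{uv}(c)\le c(\delta(S))<t$. Thus no edge of $H_t(c)$ crosses $S$, meaning $S$ is a union of components of $H_t(c)$. Since $|S|$ is odd, at least one of these components has odd order.

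\textbf{Main obstacle.} The delicate step is the parity argument in the second part. One has to produce a single tree edge of weight below $t$ whose removal leaves an odd side; this is where rooting the contracted tree at $C$ and using that $n$ is even does the work.
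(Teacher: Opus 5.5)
Your proof is correct and follows essentially the same route as the paper. Thresholding the Gomory--Hu tree at $t$ gives the clique structure. For the parity direction, the sides cut off by the deleted tree edges incident to a component $C$ partition $V\setminus C$: you argue contrapositively by rooting the contracted tree at an odd $C$ and finding an odd branch, whereas the paper shows directly that all these sides are even. An odd minimum cut $S$ handles the case $t>\tau(c)$ in both.
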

\begin{proof}
    Choose $T$ and $c'$ as in Theorem~\ref{thm:gomory-hu}.
    Thus, we obtain the components of $H_t(c)$ by deleting edges of $T$ of weight less than $t$ and completing each remaining component of $T$ to a clique.

    If $t\le\tau(c)$, every edge that was deleted from $T$ corresponds to a cut of weight less than $\tau(c)$.
    Hence, both sides of the cut have even size.
    Consider any component $C$ that remains connected after deleting the edges from $T$.
    Its incident deleted tree edges induce separations such that the sides disjoint from $C$ partition $V\setminus C$.
    By the previous argument, each of these sets has even size.
    Since $n$ is even, this implies that also $C$ has even size.

    If $t>\tau(c)$, choose a subset of vertices $S$ with odd size such that $c(\delta(S))=\tau(c)$.
    For $u\in S$ and $v\notin S$, we have $\lambda_{uv}(c)\le\tau(c)<t$.
    Thus, no edge of $H_t(c)$ crosses $S$.
    Since $|S|$ is odd, at least one of the components contained in $S$ has odd order.
\end{proof}

Our goal is to get an expression for $\tau(c)$ in terms of minimum cut values. 
To do this, we measure how the size of a component containing a given vertex in $H_{t}(c)$ changes as $t$ increases. 
A similar idea for an incremental construction was used to solve the odd minimum cut-set problem using Gomory-Hu trees~\cite{PadbergRao:1982}. 
We use it to derive a simple powerful expression in terms of pairwise cut values. 

\begin{lem}\label{lem:odd-parity-identity}
    Let $c\in\R_+^E$. For each $v\in V$, choose a bijection
    $\pi_v:[n-1]\to V\setminus\{v\}$ such that $\lambda_{v\pi_v(1)}(c)\le\cdots\le\lambda_{v\pi_v(n-1)}(c)$.
    Then
    \[
        \tau(c)=\min_{v\in V}\sum_{j=1}^{n-1}(-1)^{j+1}\lambda_{v\pi_v(j)}(c).
    \]
\end{lem}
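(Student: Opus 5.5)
We write the alternating sum as a layer-cake integral over thresholds and then apply Lemma~\ref{lem:odd-threshold-components}. Fix $v$ and write $\lambda_j:=\lambda_{v\pi_v(j)}(c)$, so that $0\le\lambda_1\le\dots\le\lambda_{n-1}$. For $t\ge0$ let $C_v(t)$ be the connected component of $v$ in $H_t(c)$. Since that component is a clique, $C_v(t)=\{v\}\cup\{u:\lambda_{uv}(c)\ge t\}$. Hence $|C_v(t)|-1=\#\{j:\lambda_j\ge t\}$, and this count is nonincreasing in $t$.

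The first step is the identity
\[
\sum_{j=1}^{n-1}(-1)^{j+1}\lambda_j=\int_0^\infty \mathbbm{1}\bigl[\,|C_v(t)|\text{ is even}\,\bigr]\,dt .
\]
To see this, group the terms as $(\lambda_{n-1}-\lambda_{n-2})+(\lambda_{n-3}-\lambda_{n-4})+\dots+(\lambda_1-0)$; there is an odd number $n-1$ of terms, so the last group is $\lambda_1-\lambda_0$ with $\lambda_0:=0$. For $t\in(\lambda_{j-1},\lambda_j]$ we have $\#\{i:\lambda_i\ge t\}=n-j$. This count is odd exactly when $j$ is odd, so $|C_v(t)|$ is even exactly on the intervals $(\lambda_{j-1},\lambda_j]$ with $j$ odd. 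These are precisely the intervals picked up by the grouping, and ties among the $\lambda_j$ produce zero-length intervals, so they do no harm.

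Next we bound the integral from below. By Lemma~\ref{lem:odd-threshold-components}, for $t\le\tau(c)$ every component of $H_t(c)$ is even, in particular $C_v(t)$. Hence the integral is at least $\tau(c)$ for every $v$, and the minimum over $v$ is at least $\tau(c)$.

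For the matching upper bound we need one $v$ whose component is odd for all $t>\tau(c)$. Pick an odd set $S$ attaining $\tau(c)$, and take a Gomory--Hu tree $T$ as in Theorem~\ref{thm:gomory-hu}. Using the tree description from the proof of Lemma~\ref{lem:odd-threshold-components}, the components of $H_t(c)$ are obtained by deleting the tree edges of weight $<t$. Let $e$ be a minimum-weight tree edge whose deletion yields an odd side. Such an edge exists, and its weight is $\tau(c)$: the argument in the proof of Lemma~\ref{lem:odd-threshold-components} shows that some tree edge of weight $\le\tau(c)$ separates an odd side, and every tree edge gives a cut of weight $\ge\tau(c)$ when its side is odd.

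The main obstacle is that thresholds $t$ slightly larger than $\tau(c)$ may delete other edges too, and then components split further, so we must locate a vertex whose component stays odd for every $t>\tau(c)$. The plan is to choose $v$ as follows. Start with the odd side $U$ of $e$ and descend into the subtree. Whenever the current odd vertex set is split by deleting the lightest remaining edge, one of the pieces is odd, and we pass to that piece. The descent is monotone in $t$, so we obtain a nested chain of odd sets, namely the components $C(t)$ for $t>\tau(c)$. This chain shrinks to a single vertex, and we take $v$ to be that vertex.

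Concretely, we process the tree edges inside $U$ in increasing order of weight, always keeping the odd part, since an odd set split into two parts has an odd part. This defines a vertex $v$ with $|C_v(t)|$ odd for all $t>\tau(c)$. Then the integral for this $v$ equals at most $\tau(c)$, which gives equality.
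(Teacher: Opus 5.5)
Your proof is correct and follows the paper's argument. You use the same decomposition of the alternating sum into the intervals where $|C_v(t)|$ is even, the same lower bound from Lemma~\ref{lem:odd-threshold-components}, and the same idea of following an odd component as $t$ increases until it shrinks to a single vertex. The paper starts from an odd component of $H_t(c)$ for $t$ just above $\tau(c)$ and lets it split, whereas you run this descent explicitly on the Gomory--Hu tree: you start from the odd side $U$ of a weight-$\tau(c)$ tree edge and delete the tree edges inside $U$ one at a time in increasing weight, keeping an odd piece each time. This is only a cosmetic difference. It works because, for each $t>\tau(c)$, $C_v(t)$ is exactly your current set after processing the edges inside $U$ of weight $<t$.
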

\begin{proof}
    Let $C_v(t)$ be the component of $H_t(c)$ containing $v$.
    By Lemma~\ref{lem:odd-threshold-components}, it is a clique, so
    \[
        |C_v(t)|=1+\bigl|\{u\ne v:\lambda_{uv}(c)\ge t\}\bigr|.
    \]
    Since $n-1$ is odd, the thresholds $t>0$ for which the size of $C_v(t)$ is even form the disjoint intervals
    \[
        (0,\lambda_{v\pi_v(1)}(c)],\quad
        (\lambda_{v\pi_v(2)}(c),\lambda_{v\pi_v(3)}(c)],\quad\ldots,
    \]
    using the permutation of the cut values from the statement of the Lemma.
    Their total length is $\sum_{j=1}^{n-1}(-1)^{j+1}\lambda_{v\pi_v(j)}(c)$.
    By Lemma~\ref{lem:odd-threshold-components}, the union of the intervals covers
    $(0,\tau(c)]$, for any vertex $v\in V$, so this sum is at least $\tau(c)$.

    It remains to show that there exists some vertex $v^*\in V$ such that $\tau(c)=\sum_{j=1}^{n-1}(-1)^{j+1}\lambda_{v^*\pi_{v^*}(j)}(c)$.
    Since there are only finitely many cut values, choose $\varepsilon>0$
    such that $H_t(c)$ is constant for
    $\tau(c)<t\le\tau(c)+\varepsilon$.
    By Lemma~\ref{lem:odd-threshold-components}, $H_t(c)$ contains a component of odd size. 
    We choose one such component and denote it by $S$.
    As $t$ increases, components only split and eventually become singletons.
    Whenever $S$ splits, we maintain a component of odd size.
    When this process terminates, we obtain a single vertex $v^*$.
    This vertex therefore belongs to a component of odd size for every $t>\tau(c)$.
    Hence, the union of the corresponding intervals for which the size of $C_{v^*}(t)$ is even has length exactly $\tau(c)$.
\end{proof}

\subsection{Construction of the neural network}

The power of Lemma \ref{lem:odd-parity-identity} is that it reduces computing $\tau(c)$ into the following steps. First compute pairwise cuts. Then sort them. Then compute minimum of their alternating sums. All of this fits well into a neural network representation.

\begin{proof}[Proof of Theorem~\ref{thm:exponential-separation}]
	The lower bound on $\xc(P_n)$ is established by Proposition~\ref{prop:odd-ordinary-complexity}. 
    To prove the upper bound, we use the connection between $\vxc$ and neural network complexity from Proposition~\ref{prop:vxc-nnc}. 
	To prove that $\vxc(P_n)\in O(n^8)$, it therefore suffices to show that $h_{P_n}$ can be represented by a neural network of size $O(n^8)$. For that, we use Lemma~\ref{lem:monotone} to obtain $h_{P_n}(w)=c(E)-\tau(c)$. From the inputs $w(e)$, it is straightforward to compute the values $c(e)$ and the sum $c(E)$. It therefore remains to prove that a neural network of size $O(n^8)$ can compute the value $\tau(c)$ from the values~$c(e)$.
	
	To prove this, we use the expression in Lemma~\ref{lem:odd-parity-identity}. We first show that each of the values $\lambda_{uv}$, for two distinct vertices $u,v\in V$, can be computed by a neural network of size $O(n^6)$. By the Max-flow min-cut theorem, $\lambda_{uv}(c)$ equals the maximum flow that can be sent from $u$ to $v$ in the directed graph that is obtained by replacing each undirected edge by a pair of directed edges in opposite directions with the same capacities. On a graph with $n$ vertices and $a$ arcs, Hertrich and Sering~\cite{hertrich2025relu} showed that this can be done with a neural network of size $O(n^2a^2)\subseteq O(n^6)$. Hence, the same is true for each of the values $\lambda_{uv}$. Doing this for all pairs of vertices $u,v$ yields a total size of $O(n^8)$.
	The next step towards representing $\tau(c)$ with a neural network is for each $v\in V$ to sort the values $\lambda_{uv}$ for $u\neq v$ in increasing order. A pairwise comparison can be implemented by a constant-size neural network~\cite{Arora2018}, using the identity
    \[
        (\min\{a,b\},\max\{a,b\})
        =\bigl(a-\max\{a-b,0\},\,b+\max\{a-b,0\}\bigr).
    \]
    The values $\lambda_{uv}$ can therefore be sorted by mimicking any sorting network~\cite{Batcher1968}. This is not the bottleneck in our complexity estimate. Hence, the reader may, for example, think of the values as being sorted by a hard-wired bubble sort.
    We need to sort $O(n)$ lists with $O(n)$ entries each, which is possible with size $O(n(n^2))=O(n^3)$.
	Once we have the sorted values $\lambda_{uv}$ available, the formula for $\tau(c)$ in Lemma~\ref{lem:odd-parity-identity} is only a predefined min-sum expression, which can be directly implemented in a neural network. It can be realized with $O(n^2)$ additions and $O(n)$ minimum computations and therefore does not further increase the complexity estimate.	
\end{proof}

We remark that we believe that it is possible to obtain a stronger upper bound on the virtual extension complexity, that is, $\vxc(P_n)\le O(n^4)$. 
We note that proving the bound of $O(n^4)$ for the neural network complexity is likely to require new ideas.
We decided to present the above proof of the weaker bound via neural networks, since it is technically less involved and shows the exponential separation also for neural networks.

\section{Counting Lower Bounds}
\label{sec:counting-lower-bound}

We start by defining first-order formulas over the real numbers.
An \defn{atomic formula} is an equality or inequality between (multi-variate) polynomials with real coefficients, i.e., $p(x)=q(x)$, $p(x)\le q(x)$ or $p(x)<q(x)$ for polynomials $p$ and $q$.
A general \defn{formula} can be obtained from atomic formulas by using quantifiers and the usual Boolean connectives.
We define the \defn{size} of a formula as the number of symbols in the formula (constants and variables having a unit cost).

A formula is \defn{quantifier-free}, if it contains no quantifiers.
A \defn{$\Sigma_1$-formula} is a formula of the form $G(x):=\exists z_1,\dots,\exists z_k F(x,z)$, where $F(x,z)$ is a quantifier-free formula.
Let $F(x_1, \dots, x_n)$ be a formula with no free variables other than $x_1, \dots, x_n$. 
We will say that $F$ defines a Boolean function $f : \{0, 1\}^n \to \{0, 1\}$ if the following holds:
\[f (\sigma_1,\dots,\sigma_n) = 1\ \Longleftrightarrow\ F(\sigma_1,\dots,\sigma_n) \quad\text{is true, for every}\quad \sigma_1,\dots,\sigma_n \in \{0, 1\}.\]
We point out that the above definitions extend to higher order and arbitrary fields, but this is not necessary for the results presented below.

The main idea of the proof is to use a virtual extended formulation $Q,R$ of a polytope $P$ in order to express non-membership in $P$ via an existential first-order formula whose size is polynomial in the extension complexities of $Q$ and $R$.
We achieve this in Proposition~\ref{prop:lb} by analyzing non-membership through the Minkowski sum, and applying a variant of Farkas' lemma.
The central complexity-theoretic tool is a theorem of Hrube\v{s}~\cite{Hru20}, see Theorem~\ref{Hrubes}, which bounds the number of Boolean functions which can be described via small existential first-order formulas.
We complete the proof by observing that 0/1-polytopes uniquely define Boolean functions, which means that we can apply the bound from Theorem~\ref{Hrubes}.

\begin{theorem}\label{thm:counting}
    There exists some constant $c>0$, such that for all $n,s\in\N$ with $s\ge n$, the number of 0/1-polytopes in $\R^n$ of virtual extension complexity at most $s$ is at most $2^{s^c}$.
\end{theorem}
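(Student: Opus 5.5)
The plan is to encode each 0/1-polytope $P\subseteq\R^n$ with $\vxc(P)\le s$ by a small $\Sigma_1$-formula defining a Boolean function, and then count such functions with Hrube\v{s}' theorem (Theorem~\ref{Hrubes}). A 0/1-polytope is determined by its vertex set, which is $P\cap\{0,1\}^n$. So $P$ is determined by the Boolean function $f_P(\sigma)=[\sigma\in P]$, and equally by its complement $\neg f_P$. It therefore suffices to show that $\neg f_P$ is defined by a $\Sigma_1$-formula of size $\mathrm{poly}(s)$. Theorem~\ref{Hrubes} bounds the number of Boolean functions definable by $\Sigma_1$-formulas of size $t$ by $2^{t^{c'}}$ for $t\ge n$, which then gives the claimed $2^{s^c}$.

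\textbf{Step 1: reducing non-membership to a statement about $Q$ and $R$.} Fix $Q,R$ with $P+Q=R$ and extended formulations $Q=\{Cy+c: Ay\le b\}$, $R=\{C'y'+c': A'y'\le b'\}$ with $\xc(Q)+\xc(R)\le s$; we may take $d\le m$ for both. The key fact is that $x\in P$ if and only if $x+Q\subseteq R$. One direction is trivial. For the other, suppose $x\notin P$. Separate $x$ from $P$ by $w$ with $\langle w,x\rangle>h_P(w)$. Then $h_{x+Q}(w)=\langle w,x\rangle+h_Q(w)>h_P(w)+h_Q(w)=h_R(w)$, so $x+Q\not\subseteq R$. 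Hence $x\notin P$ if and only if there exist $y$ with $Ay\le b$ and $w\in\R^n$ such that $\langle w,x+Cy+c\rangle>h_R(w)$.

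\textbf{Step 2: removing the max in $h_R$ via LP duality.} The term $h_R(w)$ hides a universal quantifier, and we eliminate it by LP duality, the Farkas variant. Since $R$ is a nonempty polytope, $h_R(w)=\max\{\langle w,C'y'+c'\rangle: A'y'\le b'\}=\min\{\langle \mu,b'\rangle+\langle w,c'\rangle: \mu\ge0,\ A'^{\top}\mu=C'^{\top}w\}$. The condition ``$\langle w,z\rangle>h_R(w)$'' therefore becomes: there exists $\mu\ge0$ with $A'^{\top}\mu=C'^{\top}w$ and $\langle w,z\rangle>\langle\mu,b'\rangle+\langle w,c'\rangle$. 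Putting the two steps together, $\neg f_P(x)$ is expressed as $\exists y,w,\mu$ of a conjunction of polynomial (in fact bilinear) equalities and inequalities. The real constants $A,b,C,c,A',b',C',c'$ are the entries of the formulation matrices.

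\textbf{Step 3: bounding size and counting.} The formula size is $O((m+n)(d+n)+(m'+n)(d'+n))$, which is $O(s^2)$ after using $d\le m$, $d'\le m'$ and $n\le s$. There is a subtlety: the constants are arbitrary reals, so the formula counts symbols, not bits. Theorem~\ref{Hrubes}, being semi-algebraic in nature, should allow arbitrary real constants at unit cost, and I would check that its statement covers this. If it only allows variables, I would replace the constants by additional existentially quantified variables. This would not work directly, since the quantification would change the defined set. So I expect the main obstacle to be matching the exact hypotheses of Hrube\v{s}' theorem (real constants, $\Sigma_1$ form, defining $\neg f_P$ rather than $f_P$). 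Once that is matched, the map $P\mapsto\neg f_P$ is injective into functions definable by formulas of size $O(s^2)$, and this yields at most $2^{(O(s^2))^{c'}}\le 2^{s^c}$ polytopes.
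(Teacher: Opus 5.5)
Your proposal is correct and follows essentially the same route as the paper: you certify $x\notin P$ by a separating direction, a point of $Q$ given by its extended formulation, and an LP-dual certificate for $R$ (Lemma~\ref{lem:certificates} and Proposition~\ref{prop:lb}), which yields a $\Sigma_1$-formula of size $O(s(s+n))=O(s^2)$ for $\sigma\mapsto[\sigma\notin P]$, and you then count these via Theorem~\ref{Hrubes}. Your worry about real constants does not arise, because formulas here are defined with real coefficients at unit cost and Hrube\v{s}' skeleton argument turns those constants into parameter variables, so you never need to quantify them existentially.
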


Since the number of 0/1-polytopes as well as the number of matroid base polytopes is doubly exponential
in the ambient dimension \cite{Dukes2003}, this yields the following corollary and concludes the proof of Theorem~\ref{thm:mbp}.

\begin{cor}\label{cor:0-1}
    There exists $\varepsilon>0$, such that 
    \[\frac{\#\{X\subseteq\{0,1\}^n:\vxc(\mathrm{conv}(X))\le 2^{\varepsilon n}\}}{2^{2^n}}\longrightarrow0.\]

    Hence, asymptotically almost all 0/1-polytopes have virtual extension complexity $2^{\Omega(n)}$.
\end{cor}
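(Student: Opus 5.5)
The statement to prove is Corollary~\ref{cor:0-1}. The plan is to derive it directly from Theorem~\ref{thm:counting} by comparing the number of 0/1-polytopes of small virtual extension complexity with the total number $2^{2^n}$ of subsets $X\subseteq\{0,1\}^n$.

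\textbf{Step 1: pass from subsets to polytopes.} Distinct subsets $X\subseteq\{0,1\}^n$ give distinct polytopes $\conv(X)$. Indeed, every point of $\{0,1\}^n$ is a vertex of the cube, so $\conv(X)\cap\{0,1\}^n=X$. Hence the numerator equals the number of 0/1-polytopes in $\R^n$ with $\vxc\le 2^{\varepsilon n}$. We use the convention $\conv(\emptyset)=\emptyset$; it contributes at most one extra set, which is harmless.

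\textbf{Step 2: apply the counting bound.} Set $s=\lceil 2^{\varepsilon n}\rceil$. For large $n$, or if necessary after replacing $s$ by $\max(s,n)$, we have $s\ge n$. Theorem~\ref{thm:counting} then bounds the numerator by
\[
2^{s^c}\le 2^{\max(2^{\varepsilon n}+1,\,n)^c}.
\]
Choose $\varepsilon<1/c$, for instance $\varepsilon=1/(2c)$. Then $s^c\le (2^{\varepsilon n}+1)^c\le 2^{c}\,2^{n/2}$ once $2^{\varepsilon n}\ge1$, and the alternative $n^c$ is also $o(2^n)$.

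\textbf{Step 3: conclude.} The ratio is at most $2^{2^c 2^{n/2}+n^c-2^n}$. This tends to $0$ because $2^n-2^c2^{n/2}-n^c\to\infty$. It follows that all but a vanishing fraction of subsets $X$ satisfy $\vxc(\conv(X))>2^{\varepsilon n}$, which is the claim that almost all 0/1-polytopes have virtual extension complexity $2^{\Omega(n)}$.

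The main obstacle is not here; it lies entirely in Theorem~\ref{thm:counting}. The only care needed in this corollary is choosing $\varepsilon$ strictly below $1/c$ and checking the hypothesis $s\ge n$.

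For Theorem~\ref{thm:mbp} the same computation applies, with the denominator replaced by the number of matroids on $[n]$. By~\cite{Dukes2003} this number is at least $2^{2^{n}/\mathrm{poly}(n)}$, and it still dominates $2^{2^{c\varepsilon' n}}$ for $\varepsilon'$ small enough.
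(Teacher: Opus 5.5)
Your proposal is correct and takes essentially the same route as the paper: it compares the bound $2^{s^c}$ from Theorem~\ref{thm:counting}, with $s\approx 2^{\varepsilon n}$ and $\varepsilon<1/c$, against the $2^{2^n}$ distinct 0/1-polytopes. The paper states this in one sentence, and you spell out the details it leaves implicit: the injectivity of $X\mapsto\conv(X)$, the hypothesis $s\ge n$, and the final estimate.
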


\subsection{Not so many polytopes}
The following lemma allows us to formulate comparisons between support functions of polytopes as a system of inequalities based on their respective extended formulations.

\begin{lem}\label{lem:certificates}
    Let $Q,R\subseteq\R^n$ be two polytopes with extended formulations $(A_Q,b_Q,C_Q,c_Q)$ and $(A_R,b_R,C_R,c_R)$, that is, 
    \[
    Q=\{C_Qu+c_Q:A_Qu\le b_Q\}\qquad\text{and}\qquad R=\{C_Rv+c_R:A_Rv\le b_R\}
    \]
    respectively.
    Let $c\in\R^n$ and $\alpha\in\R$.
    Then, the inequality $\alpha+h_Q(c)>h_R(c)$ holds if and only if there exist vectors $u$ and $t\ge 0$ of appropriate dimensions such that
    \[A_Qu\le b_Q,\quad A_R^{\top}t=C_R^{\top}c,\quad \text{and}\quad \alpha+c^{\top}(C_Qu+c_Q)>b_R^{\top}t+c_R^{\top}c.\]
\end{lem}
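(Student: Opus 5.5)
Both sides of the claimed equivalence compare $\alpha+h_Q(c)$ with $h_R(c)$, so I would compute each support function through its extended formulation and then dualize the one for $R$. First, since $Q=\{C_Qu+c_Q:A_Qu\le b_Q\}$, we have
\[
h_Q(c)=\max\{c^{\top}(C_Qu+c_Q):A_Qu\le b_Q\},
\]
and this maximum is attained because $Q$ is a nonempty polytope. Likewise, $h_R(c)=c_R^{\top}c+\max\{(C_R^{\top}c)^{\top}v:A_Rv\le b_R\}$. The inner LP is feasible and bounded, because its objective value equals $c^{\top}(C_Rv+c_R)-c_R^{\top}c$ and $R$ is bounded. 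Note that $\{v:A_Rv\le b_R\}$ itself may be unbounded, but the objective is still bounded on it. LP strong duality then gives
\[
h_R(c)=c_R^{\top}c+\min\{b_R^{\top}t:A_R^{\top}t=C_R^{\top}c,\ t\ge0\},
\]
and the dual minimum is attained.

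For the direction ($\Rightarrow$), assume $\alpha+h_Q(c)>h_R(c)$. I take $u$ to be a maximizer for $h_Q(c)$ and $t$ to be an optimal dual solution for $R$. Then $A_Qu\le b_Q$, $A_R^{\top}t=C_R^{\top}c$ and $t\ge0$ hold by construction. The strict inequality becomes $\alpha+c^{\top}(C_Qu+c_Q)=\alpha+h_Q(c)>h_R(c)=b_R^{\top}t+c_R^{\top}c$.

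For the direction ($\Leftarrow$), take any $u,t$ satisfying the three conditions. Feasibility of $u$ gives $h_Q(c)\ge c^{\top}(C_Qu+c_Q)$. For $t$ I only need weak duality: every feasible $v$ satisfies $(C_R^{\top}c)^{\top}v=t^{\top}A_Rv\le t^{\top}b_R$, hence $h_R(c)\le b_R^{\top}t+c_R^{\top}c$. Chaining these two bounds with the strict inequality in the hypothesis yields $\alpha+h_Q(c)>h_R(c)$.

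The only real subtlety is justifying strong duality and attainment, which needs the primal LP for $R$ to be feasible and bounded. Feasibility comes from $R\neq\emptyset$, which I am assuming. Boundedness comes from $R$ being a polytope, even though the lifted feasible region may be an unbounded polyhedron. Everything else is routine bookkeeping with the affine offsets $c_Q$ and $c_R$.
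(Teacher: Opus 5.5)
Your proof is correct and follows the paper's argument: a primal maximizer $u$ for $h_Q(c)$ and an optimal dual solution $t$ for the lifted LP of $R$ (via strong duality) give the forward direction, and feasibility of $u$ together with weak duality gives the converse. Your additional justification of strong duality is a detail the paper leaves implicit: the lifted LP for $R$ is feasible, and its objective is bounded because $R$ is bounded, even if $\{v:A_Rv\le b_R\}$ is not.
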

\begin{proof}
    We note that we can express $h_Q(c)$ by a primal existence certificate, i.e., there exists a vector $u$ such that $A_Qu\le b_Q$, and $c^{\top}(C_Qu+c_Q)=h_Q(c)$.
    In the same way, we can formulate the dual of the linear program $\max\{c^{\top}(C_Rv+c_R):\ A_Rv\le b_R\}$, which is given by $\min\{b_R^\top t+c_R^\top c:\ A_R^\top t=C_R^\top c,\ t\ge0\}$.
    Hence, by strong duality there exists some $t\ge0$ such that $A_R^{\top}t=C_R^{\top}c$ and $b_R^{\top}t+c_R^{\top}c= h_R(c)$.
    Conversely, we can use weak duality on $R$ and project $u$ to $Q$ in order to see that the existence of $u$ and $t$ implies the strict inequality.
\end{proof}

In Proposition~\ref{prop:lb} we show that if the virtual extension complexity of a polytope $P$ is bounded, then there exists a small existential $\Sigma_1$-formula which certifies that a point $x$ is not in $P$. 
Recall that $P+Q=R$.
In order to certify non-membership of $x$ in $P$, we can equivalently give a point $q\in Q$, as well as a dual certificate $t$ for $R$, showing that $x+q$ is not in $R$.
In order to encode the dual certificate $t$, we need to align it with the existence of a separating hyperplane between $P$ and $x$, described by an orthogonal vector $d$, see also Figure~\ref{fig:lb}.
Note that we can only obtain an existential first-order formula for \emph{non-membership} in $P$ like this, since we can ask for the existence of a separating hyperplane.
If we want to encode \emph{membership} in $P$ instead, the natural variant is to use a \emph{for-all}-quantifier, which does not lead to an existential first-order formula.

\begin{figure}[!th]
    \begin{tikzpicture}[x=1cm,y=1cm, line cap=round, line join=round, >=Latex]
        
        \tikzset{
          panel/.style={draw=mygray!55, rounded corners=1.5pt, line width=0.45pt},
          poly/.style={draw=mygray, fill=mypolyfill, line width=0.9pt},
          cert/.style={draw=myblue, line width=1.15pt},
          title/.style={font=\Large\itshape},
          setlabel/.style={font=\Large\itshape, text=mygray},
          lab/.style={font=\normalsize\itshape},
          smalllab/.style={font=\small\itshape},
          tinylab/.style={font=\scriptsize\itshape},
          certlabel/.style={font=\small, text=mygray}
        }
        
        \def\W{4.9}
        \def\H{4.0}
        \def\G{0.72}
        \coordinate (Ppanel) at (0,0);
        \coordinate (Qpanel) at ($(Ppanel)+(\W+\G,0)$);
        \coordinate (Rpanel) at ($(Qpanel)+(\W+\G,0)$);
        
        \draw[panel] (Ppanel) rectangle ++(\W,\H);
        \draw[panel] (Qpanel) rectangle ++(\W,\H);
        \draw[panel] (Rpanel) rectangle ++(\W+0.5,\H);
        
        \node[title] at ($(Ppanel)+(\W/2,\H-0.60)$) {$P$};
        \node[title] at ($(Qpanel)+(\W/2,\H-0.60)$) {$Q$};
        \node[title] at ($(Rpanel)+(\W/2,\H-0.60)$) {$P+Q=R$};
        
        \node[font=\LARGE] at ($(Ppanel)+(\W+0.36,2.05)$) {$+$};
        \node[font=\LARGE] at ($(Qpanel)+(\W+0.36,2.05)$) {$=$};

        \begin{scope}[shift={($(Ppanel)+(0.70,0.72)$)}]
          \filldraw[poly] (0,0) rectangle (2.2,1.2);
        
          \fill[myred] (2.50,0.72) circle (0.07);
          \node[lab, text=myred, anchor=west] at (2.57,0.72) {$x$};
        \end{scope}
        
        \begin{scope}[shift={($(Qpanel)+(0.78,0.72)$)}]
          \filldraw[poly] (0,0) -- (1.2,0) -- (0,0.8) -- cycle;
        
          \fill[myred] (1.2,0) circle (0.07);
          \node[lab, text=myred, anchor=west] at (1.37,0.08) {$q(u)$};
        \end{scope}
        
        \begin{scope}[shift={($(Rpanel)+(0.35,0.72)$)}]
          \filldraw[poly] (0,0) -- (3.4,0) -- (3.4,1.2) -- (2.2,2.0) -- (0,2.0) -- cycle;
        
          \draw[cert,dashed] (3.4,-0.50) -- (3.4,2.3);
          \draw[cert,->] (3.4,1.50) -- +(0.48,0);
          \node[lab, text=myblue, anchor=south] at (3.72,1.53) {$d$};
        
          \fill[myred] (3.70,0.72) circle (0.07);
          \node[lab, text=myred] at (4.1,0.98) {$x+q(u)$};
        \end{scope}

    \end{tikzpicture}
    \caption{Geometric interpretation of the formula in Proposition~\ref{prop:lb}. 
    A feasible vector $u$ yields $q(u):=C_Qu+c_Q\in Q$. 
    The dual certificate $t$, together with the normal $d$, certifies a valid halfspace for $R$ that is violated by $x+q(u)$. 
    Hence $x\notin P$.}
    \label{fig:lb}
\end{figure}

\begin{prop}\label{prop:lb}
    Let $s,n\in\mathbb{N}$ be fixed.
    For every polytope $P\subseteq\mathbb{R}^n$ with $\vxc(P)\le s$ 
    there is an existential $\Sigma_1$-formula $F_P(x_1,\dots,x_n)$ of size $O(s(s+n))$ which is true exactly when $x\notin P$.
\end{prop}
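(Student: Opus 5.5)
Fix $Q,R$ with $P+Q=R$ and $\xc(Q)+\xc(R)\le s$, together with extended formulations $(A_Q,b_Q,C_Q,c_Q)$ and $(A_R,b_R,C_R,c_R)$ attaining these complexities. As noted in the preliminaries, we may assume the number of auxiliary variables is at most the number of inequalities, so the matrices are at most $s\times s$ (and $s\times n$ or $n\times s$ for the projections). These real matrices become the constants of the formula $F_P$. The starting point is the support function identity $h_P+h_Q=h_R$. Since $P$ is a polytope, $x\notin P$ if and only if there is $d\in\R^n$ with $d^\top x>h_P(d)=h_R(d)-h_Q(d)$, that is, $d^\top x+h_Q(d)>h_R(d)$.

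By Lemma~\ref{lem:certificates}, applied with $c=d$ and $\alpha=d^\top x$, this is equivalent to the existence of $u$ and $t\ge0$ with
\[
A_Qu\le b_Q,\qquad A_R^\top t=C_R^\top d,\qquad d^\top x+d^\top(C_Qu+c_Q)>b_R^\top t+c_R^\top d .
\]
I would therefore define
\[
F_P(x):=\exists d\,\exists u\,\exists t\;\bigl(A_Qu\le b_Q\wedge t\ge 0\wedge A_R^\top t=C_R^\top d\wedge d^\top x+d^\top(C_Qu+c_Q)>b_R^\top t+c_R^\top d\bigr).
\]
The steps are as follows. First, correctness in the forward direction: if $x\notin P$, choose a separating $d$ and obtain $u,t$ from Lemma~\ref{lem:certificates}. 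Second, the converse: given a witness, the lemma gives $d^\top x+h_Q(d)>h_R(d)$, hence $d^\top x>h_P(d)$, so $x\notin P$. Note that the last atomic formula has the degree-two terms $d^\top x$ and $d^\top C_Qu$. This is allowed, since the atomic formulas are arbitrary polynomial (in)equalities.

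The remaining step is the size count, which is where some care is needed. Writing $A_Qu\le b_Q$ explicitly takes $O(s\cdot s)$ symbols, and likewise for $A_R^\top t=C_R^\top d$; the latter has $s$ rows in $t$ and $n$ terms in $d$ per equation, giving $O(s(s+n))$. The final inequality has $O(n+ns+s)$ symbols: $d^\top C_Qu$ has up to $ns$ monomials, or one can introduce $q=C_Qu+c_Q$ as existential variables with $n$ equations of length $O(s)$. The quantifier prefix has $O(n+s)$ variables. The main obstacle is keeping everything within $O(s(s+n))$ rather than something like $O(s^2n)$. The key point is to write every product entry-wise with constants, never as expanded matrix products, and to use the auxiliary variables $q$ to avoid multiplying $d$ by a full matrix expression inside one term. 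With that care, every piece is $O(s(s+n))$, and the total is as well.
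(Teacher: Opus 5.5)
Your proposal is correct and follows essentially the same route as the paper: you use $h_P+h_Q=h_R$ to turn non-membership into a separating direction $d$ with $d^\top x+h_Q(d)>h_R(d)$, apply Lemma~\ref{lem:certificates} with $c=d$ and $\alpha=d^\top x$, and write down the same existential formula, whose size is bounded by the sizes of $A_Q,A_R,C_Q,C_R$. Your more explicit symbol count, including the optional auxiliary variables $q=C_Qu+c_Q$, simply refines the paper's one-line size estimate $O(s(s+n))$.
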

\begin{proof}
    Let $Q,R\subseteq\mathbb{R}^n$ be a virtual extended formulation of $P$, i.e., $Q,R$ are polytopes such that $P+Q=R$ and $\xc(Q)+\xc(R)\le s$.
    We set $m_Q:=\xc(Q)$ and $m_R:=\xc(R)$.
    As noted above, we can assume that the representations of $Q$ and $R$ fulfill $d_Q\le m_Q$ and $d_R\le m_R$.
    Then, there are extended formulations $(A_Q,b_Q,C_Q,c_Q)$, $(A_R,b_R,C_R,c_R)$ for $Q$ and $R$, that is, 
    \[
    Q=\{C_Qu+c_Q:A_Qu\le b_Q\}\qquad\text{and}\qquad R=\{C_Rv+c_R:A_Rv\le b_R\}
    \]
    respectively, where $A_Q\in\R^{m_Q\times d_Q}$, $b_Q\in\R^{m_Q}$, $C_Q\in\R^{n\times d_Q}$, $c_Q\in\R^{n}$, and $A_R\in\R^{m_R\times d_R}$, $b_R\in\R^{m_R}$, $C_R\in\R^{n\times d_R}$, $c_R\in\R^{n}$.

    If $x\notin P$, then there is a separating hyperplane, i.e., there exists $d\in\R^n$ such that $d^{\top}x>h_P(d)$, or equivalently $d^{\top}x+h_Q(d)>h_R(d)$.
    We apply Lemma~\ref{lem:certificates} with $c:=d$ and $\alpha:=d^{\top}x$ and obtain that $x\notin P$ if and only if there exist $d\in\R^n$, $u\in\R^{d_Q}$, and $t\in\R^{m_R}$ such that
    \[
        A_Qu\le b_Q,\quad t\ge0,\quad A_R^{\top}t=C_R^{\top}d,\quad \text{and}\quad d^{\top}x+d^{\top}(C_Qu+c_Q)>b_R^{\top}t+c_R^{\top}d.
    \]
    This is the desired existential $\Sigma_1$-formula, which we denote by $F_P(x)$.
    The size of $F_P(x)$ is clearly dominated by the size of the matrices $A_Q,A_R,C_Q$ and $C_R$, giving the desired bound.
    We remark that $d^{\top} C_Q u$ is bilinear in the existential variables, which is allowed in the context of formulas as defined here.
\end{proof}  

The main lower bound result driving our reduction is Theorem~\ref{Hrubes}, which is proved by Hrube\v{s} in~\cite[Theorem~2.2]{Hru20}. 
It essentially states that there are only $2^{r^{O(1)}}$ many Boolean functions that can be defined by $\Sigma_1$-formulas of size at most $r$.
Since the statement of Theorem~\ref{Hrubes} does not appear verbatim in~\cite{Hru20}, we proceed to outline how to obtain this statement from Lemma~\ref{Hrubes-lemma} below, by following the arguments in the proof of~\cite[Theorem~2.2]{Hru20}. 

\begin{lem}[Hrube\v{s}~{\cite[Lemma~5.1]{Hru20}}]\label{Hrubes-lemma}
    Let $F_1(y),\dots,F_N(y)$ be $\Sigma_1$-formulas each of size at most $r$, such that their free variables are $y_1,\dots,y_n$.
    Then, 
    \[\#\{(F_1(a),\dots,F_N(a)):a\in\R^n\}\le\left(2^{r^{O(1)}}N\right)^n.\]
\end{lem}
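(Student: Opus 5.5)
The plan is to reduce the count of truth vectors to a count of realizable sign conditions of a family of polynomials in the free variables $y_1,\dots,y_n$, and then to apply a Milnor--Thom/Warren-type bound. The key point is that the family must have size and degree depending only on $r$ and $N$, not on $n$. The bound on realizable sign conditions then contributes the exponent $n$ exactly once.

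First I would normalize each formula. Write $F_i(y)=\exists z^{(i)}\,G_i(y,z^{(i)})$ with $G_i$ quantifier-free. Since $F_i$ has size at most $r$, the formula $G_i$ involves at most $r$ atomic formulas $p\bowtie q$, and each can be rewritten as a sign condition on the single polynomial $p-q$. It has at most $r$ bound variables, and all polynomials have degree at most $r$. So $G_i$ is a Boolean combination of sign conditions on a set $\mathcal{P}_i\subseteq\R[y,z^{(i)}]$ with $|\mathcal{P}_i|\le r$, $\deg\le r$ and $k_i\le r$ quantified variables.

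Second, and this is the main step, I would eliminate the block $z^{(i)}$ while keeping $y$ as parameters. For this I would use the block-elimination procedure of Basu--Pollack--Roy (Algorithms in Real Algebraic Geometry, block elimination / parametrized sign determination). It produces a family $\mathcal{E}_i\subseteq\R[y]$ such that, on every realizable sign condition of $\mathcal{E}_i$ over $\R^n$, the set of sign conditions of $\mathcal{P}_i$ realized in the fibre $\{a\}\times\R^{k_i}$ is constant. In particular the truth value of $F_i(a)$ is constant there. The bounds are $|\mathcal{E}_i|\le s^{k+1}d^{O(k)}$ and $\deg\le d^{O(k)}$ with $s,d,k\le r$. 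Both are $2^{r^{O(1)}}$, and crucially they do not depend on the number $n$ of parameters, because only the $z$-block is eliminated. This uniformity in $n$ is where I expect the main difficulty. A naive full quantifier elimination would give polynomial counts of order $r^{O(rn)}$, which would yield an $n^2$ in the exponent. So the real work is citing (or re-deriving via subresultants and Thom encodings applied to the $z$-variables only) a version of elimination whose output size is independent of the parameter dimension.

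Third, I would set $\mathcal{E}=\bigcup_{i=1}^N\mathcal{E}_i$. This is a family of $M\le N\cdot2^{r^{O(1)}}$ polynomials in $n$ variables of degree $D\le 2^{r^{O(1)}}$. The vector $(F_1(a),\dots,F_N(a))$ is a function of the sign condition of $\mathcal{E}$ at $a$. Hence the number of such vectors is at most the number of realizable sign conditions of $\mathcal{E}$ in $\R^n$. By the Warren/Milnor--Thom bound (in the form of Basu--Pollack--Roy, counting $\{<,=,>\}$-conditions), this is at most $(O(MD/n))^n\le (O(MD))^n$. Substituting the bounds on $M$ and $D$ gives $\bigl(2^{r^{O(1)}}N\bigr)^n$, as claimed.
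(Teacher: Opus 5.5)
Your proposal is correct. The paper gives no proof of its own here but imports the lemma from Hrube\v{s}, and your argument is essentially the one behind that result: the Basu--Pollack--Roy elimination bounds, using an eliminating family whose size and degree do not depend on the number of free variables (exactly the uniformity you single out), combined with a Warren/Milnor--Thom count of realizable sign conditions in $\R^n$.

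One detail to watch: the block-elimination result you cite may guarantee constancy of the fibrewise sign conditions only on semi-algebraically connected components of realizable sign conditions of $\mathcal{E}_i$. In that case, count these components for $\mathcal{E}$ instead. Each such component lies inside one for every $\mathcal{E}_i$, and their number obeys the same $(O(MD/n))^n$ bound.
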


\begin{theorem}[Hrube\v{s}~{\cite[proof of Theorem~2.2]{Hru20}}]\label{Hrubes}
    Let $n,r\in\mathbb{N}$ with $n\le r$.
    The number of Boolean functions $f:\{0,1\}^n\to\{0,1\}$ that can be defined by $\Sigma_1$-formulas of size at most $r$ is at most $2^{r^{O(1)}}$.
\end{theorem}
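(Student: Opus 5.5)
The plan is to derive Theorem~\ref{Hrubes} from Lemma~\ref{Hrubes-lemma} through a ``universal formula'' argument: one template formula, whose real parameters are exposed as free variables, should cover every $\Sigma_1$-formula of size at most $r$ with a given syntactic shape.

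\textbf{Step 1: separate shape from constants.} Every $\Sigma_1$-formula of size at most $r$ consists of a syntactic skeleton (the string of symbols) together with at most $r$ real constants filled into designated slots. The number of skeletons is at most $(O(r))^{O(r)} = 2^{O(r\log r)}$, since there are boundedly many symbol types and at most $r$ variable names. For a fixed skeleton $S$, I would form the formula $G_S(x, y)$ by replacing each real constant with a fresh free variable $y_1,\dots,y_k$, where $k\le r$. Then every formula with skeleton $S$ has the form $G_S(x,a)$ for some $a\in\R^k$, and $G_S$ is still a $\Sigma_1$-formula of size $O(r)$.

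\textbf{Step 2: fix the Boolean inputs and count.} Fix $S$. For each $\sigma\in\{0,1\}^n$, let $F_\sigma(y) := G_S(\sigma, y)$. This is obtained by substituting the constants $0,1$ for $x$, so it is a $\Sigma_1$-formula of size $O(r)$ whose free variables are only $y_1,\dots,y_k$. A Boolean function defined by a formula with skeleton $S$ and constants $a$ is exactly the truth vector $(F_\sigma(a))_{\sigma\in\{0,1\}^n}$. Applying Lemma~\ref{Hrubes-lemma} with $N = 2^n$ formulas and $k\le r$ free variables, the number of such vectors is at most
\[
\bigl(2^{r^{O(1)}} 2^n\bigr)^{r} = 2^{r^{O(1)} + nr} = 2^{r^{O(1)}},
\]
where the last equality uses $n\le r$.

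\textbf{Step 3: sum over skeletons.} Multiplying by the $2^{O(r\log r)}$ possible skeletons gives $2^{r^{O(1)}}$ Boolean functions in total.

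\textbf{Main obstacle.} The main care is needed in Step 1. Replacing constants by variables and plugging in $0/1$ must keep the size polynomial in $r$, and the counting of skeletons must be done correctly, in particular the renaming of bound variables $z_i$ so that only $O(r)$ variable symbols occur. If the paper's size convention counts constants and variables at unit cost, as stated earlier, both substitutions preserve the size up to a constant factor, and the bound on skeletons follows.
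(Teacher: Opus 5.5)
Your proposal is correct and follows the paper's proof essentially step for step. You replace the real constants by parameter variables to obtain a skeleton. You then apply Lemma~\ref{Hrubes-lemma} to the $N=2^n$ formulas $S_F(\sigma,y)$, which bounds the number of truth vectors per skeleton by $(2^{r^{O(1)}}2^n)^r\le 2^{r^{O(1)}}$ using $n\le r$, and you multiply by the $2^{O(r\log r)}$ possible skeletons. Your explicit observation that substituting $\sigma\in\{0,1\}^n$ leaves only the parameters as free variables is exactly what the lemma's hypotheses require, a point the paper leaves implicit.
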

\begin{proof}
    We repeat the counting argument from the proof of Hrube\v{s}~\cite[Theorem~2.2]{Hru20}. Let $f: \{0,1\}^{n} \to \{0,1\}$ be a Boolean function defined by a $\Sigma_{1}$-formula $F$ of size at most $r$. We associate to $F$ its skeleton $S_{F}$, where we replace its real constants by parameter variables $(y_1,\dots,y_r)$. We denote the resulting formula $S_F(x,y)$ as the \defn{skeleton} of $F(x)$.

     We first want to bound the number of Boolean functions that have the same skeleton. Namely, the Boolean function is determined exactly by $(F(\sigma))_{\sigma \in \{0,1\}^{n}}$. Thus, for any $a \in \mathbb{R}^{r}$, the Boolean function for specializing $S_{F}(x,y)$ to $y = a$ is determined by $(S_{F}(\sigma, a))_{\sigma \in \{0,1\}^{n}}$. By Lemma \ref{Hrubes-lemma},
    \[\#\{(S_{F}(\sigma,a)_{\sigma\in\{0,1\}^n}):a\in\R^r\}\le(2^{r^{O(1)}}2^n)^r\le2^{r^{O(1)}},\]
    using that $n\le r$. Hence, the number of Boolean functions defined by $\Sigma_{1}$-formulas of size at most $r$ with a given skeleton is at most $2^{r^{O(1)}}$.

    After adding a padding-symbol, every skeleton of size at most $r$ can be represented by a string of length $r$ over an alphabet of size $O(r)$ composed of the at most $r$ quantified variables, at most $r$ parameter variables, at most $r$ unquantified variables and constantly many connectives. Thus, the number of skeletons is at most $(O(r))^{r} \subseteq 2^{O(r \log(r))}$.
    
    Therefore, the total number of Boolean functions defined by $\Sigma_{1}$-formulas of size at most $r$ is at most the product of the number of potential skeletons and the number of Boolean functions with a given skeleton, which is  $2^{r^{O(1)}}\cdot2^{O(r\log(r))}\le2^{r^{O(1)}}$.
\end{proof}

\begin{proof}[Proof of Theorem~\ref{thm:counting}]
    Let $P\subseteq\R^n$ be a 0/1-polytope. 
    It can be uniquely associated with the Boolean function $f:\{0,1\}^n\to\{0,1\}$ with $f(\sigma)=1$ if and only if $\sigma\notin P$.
    By Proposition~\ref{prop:lb}, there is an existential $\Sigma_1$-formula $F_P(x)$ of size $r=O(s(s+n))\le O(s^2)$, which is true if and only if $x\notin P$.
    By Theorem~\ref{Hrubes}, there are at most $2^{r^{O(1)}}$ Boolean functions that can be defined by a $\Sigma_1$-formula of size at most $r$.
    Thus, there is some $c>0$, such that the number of such Boolean functions (and hence polytopes) is at most $2^{s^c}$.
\end{proof}

\section{Virtual Extension Complexity of Polygons}
\label{sec:polygons}

Due to slightly heavier notation, we start with some conventions for this section. 
If we have a list of vectors, we denote the $i$th vector in the list by $x^{(i)}$. 
In contrast, when we refer to the $i$th entry of a vector $x$, we denote it $x_{i}$. 
For a matrix $A$ with rows indexed by a set $S$ and columns by a set $T$, we denote the submatrix with rows $S' \subseteq S$ by $A_{S', \bullet}$ or with columns $T' \subseteq T$ by $A_{\bullet, T'}$. If we omit one index and just write $A_{k}$, that refers to the $k$th row of the matrix $A$.

Given a polytope $P = \{x: A x \leq b\}$ with vertex set $V(P)$, one can form a matrix $S$ with columns $b - Av$ for each vertex $v \in V(P)$. This matrix $S$ is called the \defn{slack matrix} of $P$. By feasibility of vertices, the slack matrix is always non-negative. The \defn{non-negative rank} of a non-negative matrix is the minimum number of non-negative rank $1$ matrices needed to write that matrix as a linear combination with non-negative coefficients. 

\begin{theorem}[\cite{YannakakisLemma}]
For any polytope $P$, $\xc(P)$ is the non-negative rank of its slack matrix.
\end{theorem}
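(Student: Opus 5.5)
We prove Yannakakis' theorem by establishing both inequalities between $\xc(P)$ and the non-negative rank $\mathrm{rk}_+(S)$ of the slack matrix $S$. Throughout, fix the outer description $P=\{x: Ax\le b\}$ with $m$ rows used to form $S\in\R_+^{m\times V(P)}$, so $S_{i,v}=b_i-A_iv$.

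\emph{Step 1: $\mathrm{rk}_+(S)\le \xc(P)$.} Take an extension $Q=\{y: Ey\le g\}$ with $r=\xc(P)$ inequalities and projection $x=Cy+c$. For each vertex $v$ we choose a preimage $y^{(v)}\in Q$. Each valid inequality $A_ix\le b_i$ of $P$ pulls back to the valid inequality $A_iCy\le b_i-A_ic$ on $Q$. Since $Q$ is nonempty, the affine form of Farkas' lemma (LP duality) gives $\lambda^{(i)}\ge 0$ and $\mu_i\ge0$ with $(\lambda^{(i)})^\top E=A_iC$ and $(\lambda^{(i)})^\top g+\mu_i=b_i-A_ic$. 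It follows that
\[
S_{i,v}=b_i-A_iv=(\lambda^{(i)})^\top\bigl(g-Ey^{(v)}\bigr)+\mu_i .
\]
Define $T\in\R_+^{m\times r}$ to have rows $\lambda^{(i)}$, and $U\in\R_+^{r\times V(P)}$ to have columns $g-Ey^{(v)}$. This writes $S=TU+\mu\mathbf 1^\top$ with non-negative factors, so $\mathrm{rk}_+(S)\le r+1$. The extra $+1$ is the one delicate point. It can be removed: if $P$ is not a point, then $Q$ is not a single point either, so some facet inequality of $Q$ is not tight everywhere and the $\mu$ term can be absorbed into a combination of the existing factors. I would handle this carefully, or else accept the standard convention under which the theorem is usually stated up to this additive constant. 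The point case is trivial.

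\emph{Step 2: $\xc(P)\le \mathrm{rk}_+(S)$.} Given a factorization $S=TU$ with $T\in\R_+^{m\times r}$ and $U\in\R_+^{r\times V(P)}$, define
\[
Q=\{(x,y)\in\R^n\times\R^r: Ax+Ty=b,\ y\ge 0\}.
\]
This system has exactly $r$ inequalities; the equations do not count, since they can be projected out. We check the two inclusions for the projection onto $x$. If $(x,y)\in Q$, then $Ax=b-Ty\le b$, so $x\in P$. Conversely, each vertex $v$ lifts to $(v,U_{\bullet,v})\in Q$, because $Av+TU_{\bullet,v}=Av+S_{\bullet,v}=b$. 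Since $Q$ is convex, the projection of $Q$ contains $\mathrm{conv}\,V(P)=P$. Hence $\xc(P)\le r$.

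\emph{Main obstacle.} Step 2 is routine. The only genuine subtlety is the affine constant $\mu$ in Step 1, and I expect that is where most of the care goes. The cleanest fix is to first append the trivial valid inequality $0\le 1$ to the description of $Q$; this changes the count by one only in degenerate cases, and one then argues it is redundant whenever $\dim P\ge1$. Independence of the result from the choice of outer description follows from the argument itself, since Step 2 works for any valid description and Step 1 for any description of $P$.
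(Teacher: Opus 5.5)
Your Step 2 is correct and is the standard construction; the paper only cites Yannakakis, so there is no in-paper proof to compare against. The gap is in Step 1. As written, it proves only $\mathrm{rk}_+(S)\le \xc(P)+1$, and none of your three suggested repairs closes it.

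\begin{itemize}
\item To absorb $\mu\mathbf 1^\top$ into $TU$, you need the all-ones row to lie in the cone spanned by the rows of $U$. That means you need some $\nu\ge 0$ with $\nu^\top E=0$ and $\nu^\top g=1$.
\item The observation that some facet inequality of $Q$ is not tight everywhere does not produce such a $\nu$. A single non-identically-zero slack row of $Q$ says nothing about the constant row.
\item Appending $0\le 1$ to the description of $Q$ only helps if that inequality is a non-negative combination of the rows of $(E\mid g)$ with zero slack. That is precisely the claim to be proved, and you assert it ("one then argues") without proving it. Its geometric redundancy holds trivially and is useless here.
\item Yannakakis' theorem is not conventionally stated up to an additive constant. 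The statement here is an exact equality, so the fallback proves something weaker.
\end{itemize}

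The missing observation takes one line. The slack matrix is taken with respect to a facet (irredundant) description, so every row $i$ has a zero entry $S_{i,v_0}=0$. In your identity $S_{i,v_0}=(\lambda^{(i)})^\top(g-Ey^{(v_0)})+\mu_i$, both summands are non-negative. Hence $\mu_i=0$ for every $i$, and $S=TU$ with $r$ terms. Equivalently, use strong LP duality instead of affine Farkas: $\max_{y\in Q}A_i(Cy+c)=b_i$ because the inequality supports $P$.

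If you also want the result for descriptions containing non-supporting rows, this is exactly where $\dim P\ge 1$ enters.
\begin{itemize}
\item Choose $w$ with $\alpha=\min_{x\in P} w^\top x<\beta=\max_{x\in P} w^\top x$.
\item Apply the tight-row argument to the supporting inequalities $w^\top x\le\beta$ and $-w^\top x\le-\alpha$. This gives multipliers $\lambda',\lambda''\ge0$ with zero slack.
\item Set $\nu=(\lambda'+\lambda'')/(\beta-\alpha)$. Then $\nu^\top E=0$ and $\nu^\top g=1$, so $\mathbf 1^\top=\nu^\top U$ and $S=(T+\mu\nu^\top)U$.
\end{itemize}

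Without such a hypothesis the claim is genuinely false. A point described with one non-tight row has $\mathrm{rk}_+(S)=1$ but $\xc=0$. So your remark that the point case is trivial also needs this qualification.
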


This is the central tool in the theory of extension complexity. In general, non-negative rank and rank differ substantially as highlighted for example in Theorem 1.4 of \cite{LowDimExtensions}. However, in the case of rank $\leq 2$, it turns out they coincide:

\begin{theorem}[Theorem 4.1 of \cite{NonnegRank2}]
\label{thm:LowRank}
For non-negative matrices of rank at most $2$, their non-negative rank is equal to their rank.
\end{theorem}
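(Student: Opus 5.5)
Let $M$ be a non-negative $m\times k$ matrix of rank $r\le 2$; we want to show $\mathrm{rank}_+(M)=r$. Since $\mathrm{rank}_+(M)\ge\mathrm{rank}(M)$ always holds, only the upper bound needs proof. The cases $r=0$ (zero matrix) and $r=1$ are immediate. In the rank-one case $M=ab^\top$, and because $M$ is non-negative we may flip signs so that $a$ and $b$ are both non-negative. The real work is the case $r=2$, which I would reduce to plane geometry.

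First, delete zero columns; this does not change either rank. Next, scale each remaining column $M_{\bullet,j}$ by a positive factor so that its entries sum to $1$. Positive column scaling preserves both the rank and the non-negative rank, because a non-negative factorization of the scaled matrix rescales back to one of $M$. After this step every column lies in the simplex $\Delta_{m-1}=\{x\ge 0:\mathbf{1}^\top x=1\}$. The columns also lie in the column space $L$ of $M$, which is $2$-dimensional. Because $L$ contains a column with coordinate sum $1$, the set $L\cap\{\mathbf{1}^\top x=1\}$ is an affine line. Hence every column lies in the segment
\[
\sigma:=L\cap\Delta_{m-1},
\]
which is the intersection of an affine line with a polytope, and so is a bounded interval with endpoints $p,q\ge 0$. (The case $p=q$ would force rank $1$.)

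Each column can then be written as $M_{\bullet,j}=\lambda_j p+\mu_j q$ with $\lambda_j,\mu_j\ge 0$. Collecting these gives $M=[p\ q]\,W$, where $W$ is a non-negative $2\times k$ matrix whose $j$-th column is $(\lambda_j,\mu_j)^\top$. This is a non-negative factorization of inner dimension $2$, so $\mathrm{rank}_+(M)\le 2$. Undoing the column scaling completes the proof.

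The only step needing care is the claim that all columns lie in a segment inside the non-negative orthant, that is, that $\sigma$ is compact with non-negative endpoints. This is the crux, and it holds because $\Delta_{m-1}$ is compact and contained in $\mathbb{R}^m_{\ge0}$. Everything else is bookkeeping. A potential pitfall is forgetting the zero columns, which cannot be normalized; deleting them first and reinserting them with zero weights in $W$ handles this.
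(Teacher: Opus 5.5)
Your proof is correct. The paper itself gives no proof of this statement to compare against. It quotes the result from Cohen and Rothblum as a black box and uses it only once: to conclude that the non-negative, rank-$\le 2$ blocks $N(t)=A_{f^{-1}(t),\bullet}O(t)$ in the proof of Lemma~\ref{lem:MinkSumBound} have non-negative rank at most $2$.

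What you supply is the standard cone-geometric argument. The columns of $M$ lie in the pointed two-dimensional cone $L\cap\mathbb{R}^m_{\ge 0}$. Normalizing the columns turns this cone into the segment $\sigma=L\cap\Delta_{m-1}$, and its two endpoints $p,q$ play the role of the cone's extreme rays.

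Each step you flag checks out:
\begin{itemize}
\item $\mathbf{1}^\top$ is not identically zero on $L$, so $L\cap\{\mathbf{1}^\top x=1\}$ is an affine line.
\item Its intersection with the simplex is a non-empty compact segment whose endpoints lie in $\Delta_{m-1}\subseteq\mathbb{R}^m_{\ge 0}$.
\item Every normalized column is a convex combination of $p$ and $q$.
\item Undoing the positive column scaling and reinserting zero columns keeps $W$ non-negative.
\end{itemize}
In the rank-one case, the sign flip is legitimate because a non-negative $ab^\top$ forces each of $a$ and $b$ to have entries of a single sign.

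A small variant is to replace $\sigma$ by the smaller segment spanned by the normalized columns themselves. Its endpoints are two rescaled columns of $M$, so you get a non-negative factorization whose left factor consists of actual columns of $M$. Either version suffices for the paper's use. Including an argument like yours would make that step of the paper self-contained at the cost of one paragraph.
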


An observation we rely on frequently is that for a non-negative matrix composed of two matrices
\[C=\begin{pmatrix} A \\ B\end{pmatrix},\]
the non-negative rank of $C$ is at most the non-negative rank of $A$ plus the non-negative rank of $B$ as one can just combine rank $1$ decompositions of $A$ and $B$ into a rank $1$ decomposition of $C$ by adding $0$ blocks.

For the upper bound, we are able to use a simpler version of the beautiful ideas from Kwan, Sauermann, and Zhao \cite{LowDimExtensions} for finding upper bounds for cyclic polygons. First, as is standard in the extension complexity literature, they use the equivalent formulation by Yannakakis in \cite{YannakakisLemma} as finding the non-negative rank of the slack matrix. They then split the vertices of the polygons into buckets of size $O(\sqrt{n})$ and then use this to decompose the slack matrix. However, making sure the geometry is tame enough to get their decomposition to work is both delicate and nontrivial. 

For us, since we use virtual extension complexity, we can instead Minkowski sum with a polygon $Q$ that is freely chosen. We choose $Q$ to have $O(\sqrt{n})$ many vertices and such that each normal cone of a vertex of $Q$ intersects the interior of the normal cones of at most $O(\sqrt{n})$ vertices of $P$. Then $\xc(Q)$ is $O(\sqrt{n})$, since it has $O(\sqrt{n})$ many vertices, so it suffices to bound the extension complexity of $P+Q$. In fact, extension complexity does not change under scaling, so it suffices to bound the extension complexity of $P+\lambda Q$ for $\lambda$ sufficiently large. The normal cones of vertices of $Q$ give a partition of the vertices of $P+\lambda Q$ into $O(\sqrt{n})$ pieces all of size $O(\sqrt{n})$, which we use to decompose the slack matrix much like Kwan, Sauermann, and Zhao. Analyzing the decomposition then yields our upper bound. 

The lower bound argument is a straightforward generalization of the transcendence degree based argument for lower bounding extension complexity of polygons from \cite{PolygonExtensions}.

Finally, we rely on some basic terminology regarding Minkowski sums and normal fans as may be found in Chapter $7$ of \cite{LecturesonPolytopes}. In particular, for a polygon, given a vertex one can construct a cone by taking the non-negative span of the two outer normals of the edges incident to that vertex called the \defn{normal cone} of that vertex. The set of all normal cones is called the normal fan. Given two polygons $P$ and $Q$, the normal fan of the Minkowski sum is the common refinement of the two normal fans. In particular, the set of rays of the cones will just be the union of the rays of all cones.

\subsection{The Upper Bound}

Our strategy to prove our upper bound is as follows. We are given as input a polygon $P$ with $n$ vertices, and we will Minkowski sum it with a sufficiently large scaling $\lambda Q$ of a polygon $Q$ with $O(\sqrt{n})$ vertices. Then $\vxc(P) \leq \xc(\lambda Q) + \xc(P+\lambda Q)$. Since $\lambda Q$ has $O(\sqrt{n})$ vertices, its extension complexity is at most $O(\sqrt{n})$. Thus, the core technical hurdle is to choose $Q$ in such a way that $\xc(P + \lambda Q)$ is $O(\sqrt{n})$.

The proof follows from a technical lemma that adapts the approach of Kwan, Sauermann, and Zhao for bounding the extension complexity of cyclic polygons. There they split the edges of the polygon into consecutive blocks of size $\lceil\sqrt{n}\rceil$. Then they use these blocks to find a decomposition of their matrix.

We will do the same here, except our advantage is that $Q$ is engineered. We can choose $Q$ freely such that each normal cone of a vertex of $Q$ intersects the interior of at most $O(\sqrt{n})$ normal cones of a vertex of $P$ on their interior. This will give a partition into blocks that we then use to decompose the matrix. The choice of $\lambda$ sufficiently large comes into play to ensure that each piece of the decomposition is non-negative. 

\begin{lem}
\label{lem:MinkSumBound}
Let $P$ and $Q$ be polygons with irredundant inequality descriptions $P= \{x \in \mathbb{R}^{2}: Ax \leq b\}$ and $Q = \{x \in \mathbb{R}^{2}: C x \leq d\}$ respectively, where $A$ has $m$ rows, $C$ has $n$ rows and no two rows of $A$ and $C$ are positive scalar multiples of one another. Then there exists $\lambda > 0$ such that
\[\xc(P+\lambda Q) \leq \ell + 3n,\]
where $\ell$ is the maximum number of normal cones of vertices of $P + \lambda Q$ all contained in the same normal cone of a vertex of $Q$. 
\end{lem}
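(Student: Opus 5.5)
The plan is to apply Yannakakis' theorem and bound the non-negative rank of the slack matrix $S$ of $P+\lambda Q$ by splitting it into blocks, using the observation that stacking or summing non-negative matrices adds their non-negative ranks.

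\textbf{Setup.} Since no row of $A$ is a positive multiple of a row of $C$, the normal fan of $P+\lambda Q$ is the common refinement of the two fans. So $P+\lambda Q=\{x: Ax\le b+\lambda h_Q(A),\ Cx\le \lambda d+h_P(C)\}$ is irredundant, with $m+n$ facets. Every vertex $w$ of $P+\lambda Q$ is uniquely $w=p+\lambda q$, with $p$ a vertex of $P$ and $q$ a vertex of $Q$, and its normal cone lies inside that of $q$. Group the columns of $S$ by $q$: there are $n$ groups $G_q$, each of size at most $\ell$. Also assign to each row $a_i$ the vertex $q(i)$ of $Q$ whose normal cone contains $a_i$ in its interior.

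\textbf{The $C$-rows.} These form an $n$-row non-negative matrix, so their non-negative rank is trivially at most $n$.

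\textbf{The $A$-rows.} The entry at $(i,w)$ is
\[
(b_i-a_ip)+\lambda\bigl(h_Q(a_i)-a_iq\bigr).
\]
The second summand depends only on $(i,q)$. Because $a_i$ is a conic combination of the two rows of $C$ adjacent to it in the fan of $Q$, the inequality $a_ix\le h_Q(a_i)$ is a conic combination of two facet inequalities of $Q$. Hence the second summand is a non-negative combination of the $n$ rows of the slack matrix of $Q$ (with columns duplicated within each group). This gives non-negative rank at most $n$.

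The second summand vanishes exactly when $q=q(i)$. For $q\ne q(i)$ it is at least $\lambda\delta$, where $\delta>0$ is a uniform gap determined by $P$ and $Q$ alone. This slack is what we use to shift mass around.

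\textbf{Splitting the first summand.} I would write $b_i-a_ip=\bigl(b_i-a_ip^*(q)\bigr)+a_i\cdot\bigl(p^*(q)-p\bigr)$, where $p^*(q)$ is a fixed reference vertex of $P$ for the group $G_q$.
\begin{itemize}
\item The term $b_i-a_ip^*(q)$ depends only on $(i,q)$. I would fold it, together with a constant $\gamma$, into the $(i,q)$-part. For large $\lambda$ the off-diagonal entries stay non-negative. The intent is that the combined $(i,q)$-matrix still has non-negative rank $O(n)$ (charging an extra $n$); this is the piece I am least sure of, since it is no longer literally the slack matrix of $Q$.
\item For the local term, enumerate the columns of each group as $w_1(q),\dots,w_{\ell}(q)$. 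For each $k\le\ell$, let $M_k$ be the matrix supported on the $k$-th columns of all groups, with entries $a_i\cdot\bigl(p^*(q)-p_k(q)\bigr)$ plus the correction $\gamma$.
\end{itemize}
Since $a_i\in\mathbb{R}^2$, each $M_k$ is a product of an $m\times 2$ matrix and a $2\times n$ matrix (after absorbing $\gamma$ appropriately). So it has rank at most $2$, and by Theorem~\ref{thm:LowRank} its non-negative rank is at most $2$, provided all its entries are non-negative. One may even be able to arrange rank $1$ per $k$, matching the count $\ell$.

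\textbf{Main obstacle.} The crux is non-negativity of the pieces. On diagonal entries ($q=q(i)$), non-negativity must come from $p$ being near the face of $P$ maximizing $a_i$, which is why $p^*(q)$ should be chosen as an extreme vertex of the consecutive run of vertices in $G_q$. On off-diagonal entries we instead borrow $\lambda\delta$ from the $(i,q)$-part. Choosing $\lambda$ large enough, depending on the diameters of $P$ and $Q$ and on $\delta$, makes every piece non-negative. Adding up the contributions, $n$ for the $C$-rows, $n$ for the $Q$-slack part, at most $n$ for the shifted $(i,q)$-part, and $\ell$ for the local pieces, should give $\ell+3n$. What I would check first is whether the local pieces can be made rank $1$ per $k$ rather than rank $2$; if only rank $2$ is achievable, the count becomes $2\ell+3n$.
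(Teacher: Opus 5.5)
Your setup is right. The $C$-rows cost $n$. The $\lambda$-term $\lambda(h_Q(a_i)-a_iq)$ vanishes exactly on the ``diagonal'' $q=q(i)$ and is at least $\lambda\delta$ elsewhere. Your rank worry about the $(i,q)$-part is unfounded: any non-negative matrix whose columns are constant on each group $G_q$ has at most $n$ distinct columns, hence non-negative rank at most $n$.

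The genuine gap is non-negativity of the local pieces on the diagonal, where there is no $\lambda\delta$ to borrow. There the slack $b_i-a_ip_k(q(i))$ is $0$ whenever $p_k(q(i))$ is an endpoint of edge $i$ of $P$. These endpoints do belong to $G_{q(i)}$, since $a_i$ is interior to the normal cone of $q(i)$. So in any split into a non-negative $(i,q)$-only part plus a non-negative local part, the $(i,q)$-part must vanish at $(i,q(i))$. The local part must then carry the entire slack $b_i-a_ip_k(q(i))$ on the diagonal.

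Your form $a_i(p^*(q)-p_k(q))+\gamma$ cannot do this, because it needs $\gamma=b_i-a_ip^*(q)$ simultaneously for all rows $i$ with $q(i)=q$. Take your extreme vertex $p^*=p_1$ and a run containing the edges $[p_1,p_2]$ and $[p_2,p_3]$ with normals $a_i,a_{i'}$. Then you need $\gamma=b_i-a_ip_1=0$ and $\gamma=b_{i'}-a_{i'}p_1>0$ at once. Making $\gamma=\gamma_i$ row-dependent turns $M_k$ into the matrix with entries $(a_i,\gamma_i)\cdot(p^*(q)-p_k(q),1)^{\top}$. That matrix in general has rank $3$, so Theorem~\ref{thm:LowRank} no longer applies. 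And even if the rank-$2$ pieces were non-negative, you would only get $2\ell+3n$, not $\ell+3n$.

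The missing idea is to let the local piece depend on the \emph{row's} group instead of the column's. Define $M_{i,(q,k)}:=b_i-a_ip_k(q(i))$, i.e., copy the actual slack of the $k$-th vertex of the row's own group $G_{q(i)}$ into every group.
\begin{itemize}
\item These entries are non-negative, and column $(q,k)$ is independent of $q$. So $M$ has at most $\ell$ distinct columns and non-negative rank at most $\ell$. This is precisely the ``rank $1$ per $k$'' you hoped for.
\item The remainder $N=S_A-M$ has entries $\lambda a_i(q(i)-q)+a_i\bigl(p_k(q(i))-p_k(q)\bigr)$. These vanish on the diagonal, and are non-negative off it once $\lambda\delta\ge\max_i\|a_i\|\,\mathrm{diam}(P)$.
\item Do not split off the $\lambda$-term through the slack matrix of $Q$. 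Instead, group the rows by their vertex $q'=q(i)$. On those rows $N$ has entries $a_i\cdot\bigl(\lambda(q'-q)+p_k(q')-p_k(q)\bigr)$. This is the product of a matrix with rows $a_i\in\R^2$ and a matrix with two rows, so its non-negative rank is at most $2$ by Theorem~\ref{thm:LowRank}.
\end{itemize}
Summing over the $n$ vertices $q'$ gives $2n$, hence $\ell+2n$ for the $A$-rows and $\ell+3n$ overall.
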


\begin{proof}
 Let $R = P + \lambda Q$. Since no row of $A$ is a positive multiple of a row of $C$, the set of rays of the normal fan of $P$ is disjoint from the set of rays of the normal fan of $Q$. Since $R$ is a Minkowski sum of $P$ and $\lambda Q$ and Minkowski sum refines the normal fans, the inequalities defining $P+\lambda Q$ come from the rows of $A$ and $C$. Thus, we have
\[R = \left\{x \in \mathbb{R}^{2}: \begin{pmatrix} A \\ C\end{pmatrix} x \leq \begin{pmatrix} \tilde{b} \\ \tilde{d}\end{pmatrix}\right\}\]
for some $\tilde{b} \in \mathbb{R}^{m}$ and $\tilde{d} \in \mathbb{R}^{n}$. 

We rely on the non-negative rank formulation for extension complexity, so we will need the slack matrix of $R$. For this, we need to list the vertices of $R$. We will do so with a slight adjustment. 

The normal fan of $R$ is the common refinement of the normal fans of $P$ and $Q$. By definition of $\ell$, the normal cone of each vertex of $Q$ is subdivided into at most $\ell$ normal cones when Minkowski summed with $P$. In particular, we may write the vertices of $Q$ in cyclic order $q^{(1)}, q^{(2)}, \dots, q^{(n)}$. Then for each $q^{(j)}$, we may write the vertices of $P$ with normal cones that intersect the interior of the normal cone of $q^{(j)}$ in cyclic order as $p^{(j,1)}, \dots, p^{(j,\ell)}$. Note there may be fewer than $\ell$ vertices for which this is the case. It will be useful as a notational tool to just repeat the last vertex for last $\ell- k$ coordinates if there are only $k< \ell$ vertices. Then we let $r^{(i,j)} = \lambda q^{(i)} + p^{(i,j)}$ for all $i \in [n]$ and $j \in [\ell]$. The slack matrix has columns corresponding to vertices of $R$, where each column is 
\[s^{(i,j)} = \begin{pmatrix} \tilde{b} \\ \tilde{d}\end{pmatrix} -\begin{pmatrix} A \\C\end{pmatrix}r^{(i,j)}\] 
for each distinct vertex $r^{(i,j)}$. As noted in the preliminaries, $\xc(P+\lambda Q)$ is the non-negative rank of the slack matrix. Furthermore, if we add a copy of a column, the non-negative rank of the slack matrix does not change. Thus, we may index the columns by $[n] \times [\ell]$, where column $(i,j)$ corresponds to $s^{(i,j)}$. The rows of the slack matrix correspond to the rows of $A$ and rows of $C$. We will write the slack matrix as
\[S= \begin{pmatrix}S_{A} \\S_{C} \end{pmatrix},\]
where $S_{A}$ is the submatrix corresponding to rows of $A$ and $S_{C}$ is the submatrix corresponding to the rows of $C$.

Then $S_{C}$ has $n$ rows and thus has non-negative rank at most $n$. The non-negative rank of the slack matrix is at most the sum of the non-negative rank of $S_{A}$ and the non-negative rank of $S_{C}$. Therefore, to prove the result, it suffices to show the non-negative rank of $S_{A}$ is at most $\ell + 2n$. 

 For a vertex of $P + \lambda Q$, its slack is a weighted sum of its slack from $P$ and slack from $Q$. Since $\lambda$ is very large, the impact of the slack from $Q$ is more substantial. Our decomposition separates the impact of these slacks.
 
 Define $f: [m] \to [n]$ by 
 \[f(k) = \arg\max_{j \in [n]} A_{k} q^{(j)}.\]
 That is, $f(k)$ is the index of the unique vertex of $Q$ maximized by normal of the $k$th inequality of $P$. 

 Define a matrix $M$ with $m$ rows and $\ell n$ columns by 
 \[M_{k,(i,j)} =  s^{(f(k),j)}_{k},\]
 where $s^{(f(k),j)}_{k}$ is the slack at the $k$th inequality of $A$ at the vertex $r^{(f(k),j)}$. Note that, by construction, for all $i \neq i' \in [n]$,
 \[M_{k,(i,j)} = s^{(f(k),j)}_{k}= M_{k,(i',j)}.\]
 Thus, $M$ has at most $\ell$ distinct columns and all entries are entries of $S_{A}$ and thus non-negative. Hence, $M$ has non-negative rank at most $\ell$.

 Furthermore, $M$ and $S_{A}$ have the same dimensions, so $N = S_{A} - M$ is well-defined and $S_{A} = M + N$. Then to complete the proof, since $M$ has non-negative rank at most $\ell$, it suffices to show that $N$ is non-negative and has non-negative rank at most $2n$. For non-negativity, observe that   
 \begin{align*}
     N_{k, (i,j)} &= (S_{A})_{k,(i,j)} - M_{k,(i,j)} \\
     &= s^{(i,j)}_{k} - s^{(f(k),j)}_{k} \\
     &= (\tilde{b}_{k}- A_{k}(\lambda q^{(i)}+p^{(i,j)})) -(\tilde{b}_{k} - A_{k}(\lambda q^{(f(k))} +  p^{(f(k),j)})) \\
     &= A_{k}(\lambda(q^{(f(k))}-q^{(i)})+p^{(f(k),j)}-p^{(i,j)}) \\
     &= \lambda (A_{k}q^{(f(k))} - A_{k}q^{(i)})+A_{k}(p^{(f(k),j)} - p^{(i,j)}).
 \end{align*}
In particular, if $i = f(k)$, one can see from this computation that 
\[N_{k,(i,j)} = \lambda (A_{k}q^{(f(k))} - A_{k}q^{(f(k))})+A_{k}(p^{(f(k),j)} - p^{(f(k),j)}) = 0.\] 
 
To ensure non-negativity when $i \neq f(k)$, we need to use that $\lambda$ is sufficiently large. By definition of $f$, $A_{k}$ is maximized uniquely at $q^{(f(k))}$ over $Q$. Thus, $A_{k}q^{(f(k))} - A_{k}q^{(i)} > 0$ for all $i \neq f(k)$. Thus, for all $\lambda$ sufficiently large,
 \[\lambda (A_{k}q^{(f(k))} - A_{k}q^{(i)}) > -A_{k}(p^{(f(k),j)} - p^{(i,j)}), \]
 which implies that for all $\lambda$ sufficiently large
 \[N_{k,(i,j)} = \lambda (A_{k}q^{(f(k))} - A_{k}q^{(i)})+A_{k}(p^{(f(k),j)} - p^{(i,j)}) > 0.\]
 Finally, all that remains is to show $N$ has non-negative rank at most $2n$. To do this for each $t \in [n]$, let $N(t)$ denote the submatrix of $N$ with rows in $f^{-1}(t)$. It suffices to show then that $N(t)$ has non-negative rank at most $2$. If a non-negative matrix has rank at most $2$, by Theorem \ref{thm:LowRank} it has non-negative rank at most $2$.
 
 Note that for all $t \in [n]$ with $k \in f^{-1}(t)$
 \[N_{k,(i,j)} = A_{k}(\lambda(q^{(t)}-q^{(i)})+p^{(t,j)}-p^{(i,j)})\]
 Thus, for all $k \in f^{-1}(t)$,
 \[N(t)_{k,(i,j)} = A_{k}(\lambda(q^{(t)}-q^{(i)})+p^{(t,j)}-p^{(i,j)}). \]
 Let $O(t)$ be the $2 \times n\ell$ matrix with columns  
 \[O_{\bullet, (i,j)}(t) = \lambda(q^{(t)}-q^{(i)})+p^{(t,j)}-p^{(i,j)}.\]
 Then, by definition of matrix multiplication,
 \[N(t) = A_{f^{-1}(t),\bullet} O(t).\]
 Note that $A_{f^{-1}(t),\bullet}$ is an $|f^{-1}(t)| \times 2$ matrix, and $O(t)$ is a $2 \times n\ell$ matrix, so $N(t)$ has rank at most $2$. Any non-negative matrix of rank at most $2$ has non-negative rank at most $2$ by Theorem \ref{thm:LowRank}, so $N(t)$ has non-negative rank at most $2$. This completes the proof, but we may trace back the reasoning explicitly.

 Note $N(t)$ has non-negative rank at most $2$, so since the rows of $N$ are partitioned by the union over all $n$ choices of $t$ of $N(t)$, $N$ has non-negative rank at most $2n$. As already observed $S_{A} = M + N$, where $M$ has non-negative rank at most $\ell$, so $S_{A}$ has non-negative rank at most $\ell + 2n$. Furthermore, $S_{C}$ has non-negative rank at most $n$, so the slack matrix $S = \begin{pmatrix} S_{A} \\ S_{C}\end{pmatrix}$ has non-negative rank at most $\ell + 3n$. The non-negative rank of $S$ is the extension complexity of $R = P + \lambda Q$. Therefore, $P + \lambda Q$ has extension complexity at most $\ell + 3n$ as desired. 
\end{proof}

The upper bound of Theorem \ref{thm:main} follows as an application of this lemma.

\begin{proof}[Proof of Theorem \ref{thm:main} Upper Bound]
Let $P$ be a polygon with $n$ vertices. Label the vertices cyclically $1,2, \dots, n$, and for each vertex with index a multiple of $\lceil \sqrt{n} \rceil$, select a point in the interior of its normal cone on the unit circle. If the convex hull of all such points does not contain the origin in its interior, add three generically chosen points on the unit circle such that their convex hull contains the origin in its interior that all lie in the interior of some normal cones of vertices of $P$. 

Let $Q$ be the polar of the resulting point configuration. Then the edges of $Q$ have normals all in the interior of normal cones of $P$. Furthermore, $Q$ has at most $n/\lceil \sqrt{n}\rceil + 3 \leq \sqrt{n} +3$ many edges. By construction of the choice of normals of $Q$, the cone between any two consecutive normals only intersects the interior of at most $O(\sqrt{n})$ many cones. That is, $\ell$ as from the statement of Lemma \ref{lem:MinkSumBound} is $O(\sqrt{n})$.  Hence, by Lemma \ref{lem:MinkSumBound}, there exists $\lambda > 0$ so large that the extension complexity of $P + \lambda Q$ is at most $ O(\sqrt{n})$. Since $\lambda Q$ has at most $\sqrt{n} + 3$ many vertices, its extension complexity is at most $O(\sqrt{n})$ as well. Therefore, since the virtual extension complexity of $P$ is at most the sum of the extension complexity of $\lambda Q$ and $P + \lambda Q$, we have $\vxc(P)$ is in $O(\sqrt{n})$.
\end{proof}

\subsection{The Lower Bound}

The lower bound argument is similar enough to the lower bound argument for extension complexity using algebraically independent parameters from \cite{PolygonExtensions} that we recall it here. Namely, for a polygon $P$ with $n$ vertices, consider an extended formulation $Q$ of $P$ with $m$ facets in $d$ dimensions. Then $d \leq m-1$, since any $d$-dimensional polytope has at least $d+1$ facets. Therefore, $Q$ is described by a system of linear inequalities $Q = \{\mathbf{x}: Ax \leq b\}$, where $A$ has $m$ rows and at most $m-1$ columns. Furthermore, since each vertex of $Q$ is a basic feasible solution by standard results regarding the simplex method as in Chapter 4 of \cite{GartnerBook} and by Cramer's rule, each vertex of $Q$ is a rational function of the entries of $A$ and $b$. Since $Q$ is an extended formulation of $P$, there is a $2 \times d$ matrix $D$ such that $DQ = P$. In particular, every vertex of $P$ is a rational function of the entries of $D$, $A$, and $b$. The total number of entries of $D, A$, and $b$ is $2d + md + m \leq 2(m-1)+m(m-1) +m \leq (m+1)^{2}.$ Therefore, if the vertices of $P$ have algebraically independent coordinates, they must be described by at least $2n$ algebraically independent parameters in the extended formulation. That is, $(m+1)^{2} \geq 2n$, so $m \geq \sqrt{2n}-1$. That is why the extension complexity of such a polygon is $\Omega(\sqrt{n})$. We push this argument slightly further to obtain the lower bound.

\begin{proof}[Theorem \ref{thm:main} Lower Bound]

Let $P$ be a polygon with $n$ vertices. Each vertex is described by two coordinates, and assume that all $2n$ coordinates are algebraically independent over $\mathbb{Q}$. Suppose that $P + Q = R$. Then each vertex $p$ of $P$ is of the form $p = r - q$ for vertices $q$ and $r$ of $Q$ and $R$ respectively. By the previous argument, any extended formulation of $Q$ with $s_Q$ facets is described by at most $(s_Q+1)^{2}$ parameters, and any extended formulation of $R$ with $s_R$ facets is described by at most $(s_R+1)^{2}$ parameters. Any vertex of $P$ is a rational function of those parameters. Thus, $(s_Q+1)^{2} + (s_R+1)^{2} \geq 2n$, so $\max((s_Q+1)^{2}, (s_R+1)^{2}) \geq n$. It follows then that $s_Q+s_R \geq \max(s_Q,s_R) \geq \sqrt{n} - 1$. Thus, the total number of facets of the extended formulations of $Q$ and $R$ is at least $\sqrt{n} - 1$. Hence, $\vxc(P)$ is in $\Omega(\sqrt{n})$. 
\end{proof}

\section*{AI Disclosure}
The proofs of all three of our main results arose from multiple discussions with different versions of ChatGPT (ranging from 5.6 Sol to 6 Pro). As is typical currently, the proofs it provided were quite difficult to parse, and so we made efforts to digest and simplify it. In particular, the exposition was entirely written by the authors, though ChatGPT was used at multiple stages of the editing process to check for typos and correctness.

\section*{Acknowledgments}
We thank Daniel Dadush, Samuel Fiorini, and Stefan Weltge for recent discussions on virtual extension complexity, e.g., during the Cargese Workshop on Combinatorial Optimization 2026.

\bibliographystyle{amsplain}
\bibliography{bibliography.bib}

\end{document}